\documentclass[12pt, twoside, leqno]{article}
\usepackage{authblk}
\usepackage{amssymb}
\usepackage{amsmath}
\usepackage{indentfirst}
\usepackage{t1enc}
\usepackage{array}
\usepackage{graphpap}
\usepackage[mathscr]{eucal}
\usepackage{enumerate}
\usepackage{tabularx}
\usepackage{graphicx}
\usepackage[colorlinks=true,linkcolor=blue,citecolor=blue,
  urlcolor=blue]{hyperref}
\usepackage[hyphenbreaks]{breakurl}
\usepackage{amsthm}
\usepackage{graphicx}
\usepackage{float} 

\usepackage[normalem]{ulem}

\usepackage{tikz}
\usepackage{pgfplots}
\usetikzlibrary{shapes,arrows}

\usepackage[latin2,utf8]{inputenc}

\theoremstyle{plain}
\newtheorem{thm}{Theorem}
\newtheorem{cor}{Corollary}
\newtheorem{lem}{Lemma}

\newtheorem{conj}{Conjecture}

\newcommand{\mc}{\mathcal}

\newcommand{\ab}[1]{\left\vert{#1}\right\vert}
\newcommand{\zj}[1]{\left({#1}\right)}

\newcommand{\lb}[1]{\label{#1}}
\allowdisplaybreaks[1]

\normalsize

\title{Erdős--Turán Theorem and Eulerian Integers}

\author[a]{Erik Füredi}
\author[b]{Katalin Gyarmati}
\affil[a]{ELTE Eötvös Loránd University Faculty of Science, Budapest 1117, Hungary, email: erikfuredi@gmail.com}
\affil[b]{ELTE Eötvös Loránd University, Department of Algebra and Number Theory, Budapest 1117, Hungary, email: katalin.gyarmati@gmail.com}
\date{2025. 10. 12.}

\begin{document}
\baselineskip=17pt
\maketitle

\footnotetext{\noindent 2020 Mathematics Subject
Classification: Primary: 11B75, 11B83, 11D99, 11R04.\\
\indent Keywords and phrases: Eulerian integer, Euler prime, prime, Erdős--Turán theorem.\\
\indent Research supported by the Hungarian National Research Development and
Innovation Funds KKP133819.}
\begin{abstract}
Our work is motivated by the fact that the norms of the Eulerian integers
are related to the sums of form $a^2-ab+b^2$, providing a natural generalization for problems concerning products over sums or differences of integers.
Let $E$ be the set of Eulerian integers. We define $\omega_{\mathbb N}(x)$
as the number of distinct prime divisors of $x\in\mathbb N$, and $\omega_E(x)$
as the number of distinct Euler prime divisors of $x\in E$.
By the Erdős--Turán theorem, if $\mc A\subset\mathbb Z^{+}$ and $|\mathcal{A}|=3\cdot{2^{k-1}}$ ($k\in\mathbb{Z}^+$), then $\omega_\mathbb{N}(\prod_{a,b\in\mathcal{A},a\neq{b}}(a+b))\geq{k+1}$.
We prove that if $\mathcal{A} \subset E$ is a finite set and $\rho \in E$, then the value of $\omega_E(\prod_{a,b \in \mathcal{A}, a \neq b}(a+\rho b))$ has a lower bound of order $\log|\mathcal{A}|$.
Consequently, we provide lower bounds for $\mathcal{A} \subset \mathbb{N}$ for both $\omega_{\mathbb{N}}(\prod_{a,b \in \mathcal{A}, a \neq b}(a^2+ab+b^2))$ and $\omega_{\mathbb{N}}(\prod_{a,b \in \mathcal{A}, a \neq b}(a^2-ab+b^2))$.
We also give an upper bound for the minimum of $\omega_{\mathbb{N}}(\prod_{a,b \in \mathcal{A}, a \neq b}(a^2+ab+b^2))$
with a computer program, if $|\mathcal{A}|\le 8$ and sets whose largest element is relatively small.
Furthermore, using a Diophantine number theoretical lemma of Győry, Sárközy, and Stewart, we
give a lower bound of order $\log|\mathcal{A}|$ for $\omega_{\mathbb{N}}(\prod_{a \in \mathcal{A}, b \in \mathcal{B}}(f(a,b)))$ for a specific class of polynomials $f \in \mathbb{Z}[x,y]$ and finite sets $\mathcal{A}, \mathcal{B} \subset \mathbb{Z}$.

\end{abstract}

\section{Introduction}

In 1934, Erd\H{o}s and Tur\'an proved their
celebrated theorem, stated below as Theorem A \cite{ErdosTuran}:

\bigskip\noindent\textbf{Theorem A [Erdős--Turán].}
\textit{If $k\in\mathbb N$, $\mathcal{A}\subseteq\mathbb{N}$ and $|\mc A|\ge 3\cdot 2^{k-1}$
is finite set then}
\[
\omega_{\mathbb N} (\prod_{\substack{a,b\in\mc A\\ a\ne b}}
(a+b))\ge k+1,
\]
\textit{where, for $n\in\mathbb N$, $\omega_{\mathbb N} (n)$ denotes the number of distinct positive prime factors of
$n$.}

\bigskip\noindent Later, an improved version of this result was presented in the Erdős--Surányi book \cite{ErdosSuranyi}, where the lower bound was proved for $|\mathcal{A}| \ge 2^k+1$.
In 1986, Győry, Stewart, and Tijdeman \cite{GyoryStewartTijdeman}
generalized the theorem to two different sets, proving
the following result:

\bigskip\noindent\textbf{Theorem B.}
\textit{There exists an effectively computable positive constant $c$ such that if
$\mc A,\ \mc B\subseteq\mathbb Z^+$ are finite sets and
$|\mc{A}|\geq|\mc{B}|\geq 2$, then}
\[
\omega_{\mathbb N}(\prod_{a\in{\mc A},\ b\in{\mc B}}(a+b))\geq{c\log|\mc A|}.
\]

In 1988, Erdős, Stewart, and Tijdeman \cite{ErdosStewartTijdeman}
proved that the lower bound in Theorem B cannot be improved significantly:

\bigskip\noindent\textbf{Theorem C.}
\textit{For any $\varepsilon>0$ and all sufficiently large
integers $k$, there exist sets $\mc A$ and $\mc B$ such that}
$|\mc A|=k$, $|\mc B|=2$, and
\[
\omega_{\mathbb N}(\prod_{a\in{\mc A},\ b\in{\mc B}}(a+b))
<\zj{\dfrac{1}{8}+\varepsilon} (\log |\mc A|)^2\log\log |\mc A|.
\]

Inspired by these previous results, we studied whether the Erd\H{o}s--Tur\'an Theorem holds in the ring of Eulerian integers.
For this, we will use the following notations: let $\omega=\dfrac{-1+i\sqrt{3}}{2}$,
a third root of unity, and let $E$ denote the ring of Eulerian integers,
i.e.,
$E=\{a+b\omega :\ a,b\in\mathbb Z\}$.
Furthermore, for $x\in E$, $\omega_E(x)$ denotes the number of distinct prime divisors of the Eulerian integer
$x$ (in the above notation, two primes are considered distinct if they are not associated). Then the analogue of
Theorem A among Eulerian integers is as follows:

\begin{thm}\lb{thm01}
Let $\mc A\subseteq E$ be a finite set such that $|\mathcal{A}|\geq2$. Then
\[
\omega_E(\prod_{\substack{a,b\in{\mc A}\\ a\ne b}}(a+b))
> \dfrac{\log (|\mc A|-1)-\log 18}{\log 2}.
\]
\end{thm}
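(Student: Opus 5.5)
The plan is to argue by induction on $k=\omega_E(\prod_{a\ne b}(a+b))$, mimicking the classical Erd\H{o}s--Surányi proof of Theorem A. The target is an estimate of the form $|\mc A|\le 18\cdot 2^{k}+1$, which is equivalent to the stated inequality. We never count $\mc A$ directly; we only extract large subsets whose pairwise sums avoid one more prime.

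\emph{Normalisation.} Let $S=\{\pi_1,\dots,\pi_k\}$ be the Euler primes, up to units, dividing the product. Set $g=\gcd(\mc A)$ in the PID $E$ and replace $\mc A$ by $\mc A/g$. This does not change $|\mc A|$, and every new sum $(a+b)/g$ still has all its prime factors in $S$. Multiplying by units preserves the property as well. So we may assume that no prime divides every element of $\mc A$. If $0\in\mc A$, we discard it, losing one element. The base case $k=0$ forces every $a+b$ with $a\ne b$ to be a unit. Since there are only six units, a short case analysis gives a small absolute bound on $|\mc A|$, and this bound together with the later losses should account for the constant $18$.

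\emph{Inductive step.} Fix $\pi\in S$ and partition $\mc A$ according to residues in the finite field $E/(\pi)$. Then $\pi\mid a+b$ exactly when $a\equiv -b \pmod \pi$. For a residue $r\not\equiv 0$ with $r\not\equiv -r$, the classes $C_r$ and $C_{-r}$ are disjoint and "$\pi$-edges" run only between them. We keep the larger of $C_r,C_{-r}$ for each such pair, together with at most one element of $C_0$. The resulting set $\mc B$ has all pairwise sums coprime to $\pi$, hence built from $k-1$ primes. The induction hypothesis therefore bounds $|\mc B|$, giving
\[
\frac{|\mc A|-|C_0|}{2}\le |\mc B| \le 18\cdot 2^{k-1}+1 .
\]
Two difficulties remain. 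The first is that $r\equiv -r$ happens precisely when $\pi\mid 2$, i.e.\ $\pi=2$, which is inert with residue field $\mathbb F_4$. For that prime each nonzero class is a clique of $\pi$-edges, so instead we pick $\pi\in S$ with $\pi\nmid 2$ whenever possible. If $S=\{2\}$ only, or $S$ consists of $2$ and the ramified prime $1-\omega$ above $3$, we finish by a direct argument; for $1-\omega$ the residue field is $\mathbb F_3$ and the pairing $r\leftrightarrow -r$ is still fine.

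\emph{Main obstacle: the zero class.} The hardest step is controlling $|C_0|$. My first attempt is to choose $\pi$ cleverly and to apply the statement recursively to $C_0/\pi$. After the gcd normalisation $C_0\ne\mc A$, and $C_0/\pi$ is again a set whose sums use only primes in $S$, with strictly smaller total norm. So a minimal counterexample, minimal first in $k$ and then in $\sum_{a\in\mc A}N(a)$, can be assumed. Moreover, for $a\in C_0$ and $b\notin C_0$ the sum $a+b$ is coprime to $\pi$. Hence one element of $\mc A\setminus C_0$, joined to the larger half of $C_0$ taken over another prime of $S$, can be used to play the two cases against each other. If this balancing does not close with constant $2$ per prime, I would accept a worse constant. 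The goal is a bound of the shape $|\mc A|\le c\cdot 2^k+1$, with the losses from units, from the prime $2$ and from $1-\omega$ absorbed into $c=18$. Taking logarithms then yields the inequality of Theorem~\ref{thm01}.
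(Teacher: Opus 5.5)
There is a genuine gap, and it sits where you suspected: the inductive step does not close. Demanding $\pi\nmid a+b$ for \emph{all} pairs forces you to keep at most one element of $C_0$, since any two elements divisible by $\pi$ have a sum divisible by $\pi$. So you only get $|\mathcal A|\le 2|\mathcal B|+|C_0|$, and nothing in your argument bounds $|C_0|$. The gcd normalisation gives $C_0\ne\mathcal A$, but $C_0$ can still be most of $\mathcal A$, spread over many $\pi$-adic layers.

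Your remedy is circular. $C_0/\pi$ is a legitimate instance with the same prime set $S$, so minimality in $\sum N(a)$ only yields $|C_0|\le 18\cdot 2^k+1$, which cannot bound $|\mathcal A|=|C_0|+|\mathcal A\setminus C_0|$. The ``balancing'' against another prime is not an argument, and you give no rule for choosing $\pi$ so that $C_0$ is small.

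Independently, the prime $2$ can never be removed by halving, and the ``direct argument'' for $S\subseteq\{2,1-\omega\}$ is missing. With no archimedean input this is not automatic: already in $\mathbb Z$, $\{-1,1+2^n,1-2^n\}$ is a primitive triple whose pairwise sums are all $\pm 2^j$. Your attribution of the constant $18$ is likewise a guess. Note also that the strict inequality in the theorem is equivalent to $|\mathcal A|\le 18\cdot 2^k$, not $18\cdot 2^k+1$.

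The idea you are missing, which is the Erd\H{o}s--Tur\'an trick the paper uses, is to \emph{control} $\pi$ rather than eliminate it. Write $a=\pi^{v_\pi(a)}a_0$ with $\pi\nmid a_0$, and split $\mathcal A$ by the class of $a_0$ mod $\pi$, putting $r$ and $-r$ in different halves. In the larger half, $v_\pi(a+b)=\min(v_\pi(a),v_\pi(b))$ for every pair, i.e.\ the $\pi$-part of $a+b$ divides both $a$ and $b$. This works across all layers at once, so there is no zero class, and the cost is exactly a factor $2$ for each prime other than $2$.

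The contradiction then comes from size, not from an induction base:
\begin{enumerate}
\item First restrict to one of six $60^\circ$ sectors, at the cost of a factor $6$ and the element $0$. Within a sector, $|a+b|>\max(|a|,|b|)$.
\item After treating all odd-norm primes, take three survivors $a,b,c$. The odd part of each pairwise sum divides both summands.
\item The size inequality then forces $v_2(a)=v_2(b)=v_2(c)=t$.
\item Write $a=2^ta_1$, $b=2^tb_1$, $c=2^tc_1$. The law of cosines, together with $N(\text{odd part})\mid N(a_1)$, gives $4\mid a_1+b_1$, $4\mid a_1+c_1$ and $4\mid b_1+c_1$.
\item Hence $2\mid a_1$, contradicting that $N(a_1)$ is odd.
\end{enumerate}
This is where $18=6\cdot 3$ and the $-1$ come from.
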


Our proof utilizes the Law of Cosines alongside the method of Erdős and Turán.

\bigskip
It is natural to ask whether a similar statement holds for the product of sums of the type
$a+\rho b$, where $\rho$ is an arbitrary
Eulerian integer. We proved the following:

\begin{thm}\lb{thm02}
Let $\rho\in E$. Then, there exists a constant
$c$ that depends only on $\rho$ such that for any finite set $\mc A\subseteq E$
\[
\omega_E(\prod_{\substack{a,b\in{\mc A}\\ a\ne b}}(a+\rho b))
>\dfrac{\log|\mc A|}{\log 3}-c.
\]
\end{thm}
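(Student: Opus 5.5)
Throughout I assume $\rho\neq 0$. For $\rho=0$ the product is $\prod a^{|\mc A|-1}$, and $\mc A=\{1,\pi,\pi^2,\dots\}$ shows that no bound of this shape can hold. The approach is an Erdős--Turán type induction on the number of primes. For a finite set $S$ of pairwise non-associated Euler primes, call $\mc A\subseteq E$ \emph{$S$-good} if every $a+\rho b$ with $a\ne b\in\mc A$ is a unit times a product of primes from $S$. Let $T(S)$ be the supremum of $|\mc A|$ over $S$-good sets, and $T(k)=\max_{|S|=k}T(S)$. Theorem \ref{thm02} follows once we prove
\[
T(k)\le C_\rho\, 3^{k},
\]
where $C_\rho$ depends only on $\rho$. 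Then $|\mc A|\le C_\rho 3^{\omega}$ with $\omega=\omega_E(\prod(a+\rho b))$, which gives the claim with $c=\log C_\rho/\log 3$.

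\textbf{Inductive step for a prime $\pi\in S$ with $\pi\nmid\rho(1+\rho)$.} Let $\mc A$ be $S$-good with $0\notin\mc A$; removing $0$ costs only one element. For $a\in\mc A$ write $a=\pi^{v(a)}a'$ with $\pi\nmid a'$.
\begin{itemize}
\item Cross-level pairs avoid $\pi$. If $v(a)\ne v(b)$, then since $\pi\nmid\rho$ we get $v_\pi(a+\rho b)=\min(v(a),v(b))$. The same holds for any rescaling, so after dividing by $\pi^{\min}$ the factor $\pi$ disappears.
\item Same-level pairs. If $v(a)=v(b)=v$, then $\pi^{v+1}\mid a+\rho b$ exactly when $a'\equiv-\rho b' \pmod \pi$.
\end{itemize}
Consider the permutation $\sigma:x\mapsto-\rho x$ of $(E/\pi)^{*}$. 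It has no fixed points, since a fixed point would force $\pi\mid 1+\rho$. The graph with edges $\{x,\sigma x\}$ is a disjoint union of cycles, so it is properly $3$-colourable. Colour each $a$ by the colour of $a'\bmod\pi$. Two distinct elements with the same colour satisfy one of the following:
\begin{itemize}
\item they lie on different levels;
\item they lie on the same level with $a'\equiv b'$, which is fine since $(1+\rho)a'\not\equiv0$;
\item they lie on the same level with non-adjacent residues.
\end{itemize}
In each case $a+\rho b$ has $\pi$-adic valuation exactly $\min(v(a),v(b))$. I would then make the colour class $S\setminus\{\pi\}$-good. One way is to pass to a single level of a colour class and divide by $\pi^{v}$. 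That subset is $S\setminus\{\pi\}$-good, but the number of levels is not controlled.

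\textbf{Main difficulty and the fix I would try.} The unresolved step is handling many levels. My fix is a strengthened inductive hypothesis: count sets $\mc A$ for which every $a+\rho b$ is of the form $\pi_0^{\,\text{anything}}\cdot(S\text{-unit})$ with an "inert" exceptional prime set. Equivalently, I would run the induction over the primes of $S$ not dividing $\rho(1+\rho)$, treating the $\pi$-adic factor as allowed and removing $\pi$ from the count. The colouring argument shows that the $\pi$-part of each same-colour $a+\rho b$ is exactly $\pi^{\min(v(a),v(b))}$, which is determined by $a$ and $b$ separately. I expect this to let the class be treated as good for the remaining primes. This would give $T(S)\le 3\,T(S\setminus\{\pi\})+1$.

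\textbf{Base case.} Iterating leaves only primes dividing $\rho(1+\rho)$, a set $S_0$ of size bounded in terms of $\rho$. I must show that $S_0$-good sets have size bounded by a constant depending only on $\rho$. For three elements $a,b,d$, the quantities $a+\rho b$, $b+\rho a$, $a+\rho d$, $d+\rho a$, etc.\ are $S_0$-units and satisfy linear relations such as
\[
(a+\rho b)-\rho(b+\rho a)=(1-\rho^2)a.
\]
Combining such relations eliminates $a,b,d$ and yields an $S_0$-unit equation in a bounded number of variables. By the finiteness theorem for $S$-unit equations (Evertse, van der Poorten--Schlickewei), it has finitely many projective solutions, with a count depending only on $|S_0|$ and $\rho$. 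This bounds the number of possible ratios and hence $|\mc A|$. The case $\rho^2=1$ (i.e.\ $\rho=\pm1$) needs a separate and easier treatment: for $\rho=1$ use Theorem \ref{thm01}, and for $\rho=-1$ argue directly with differences.

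I expect the level issue in the inductive step and this base-case unit-equation bound to be the real obstacles. The 3-colouring of the cycles of $\sigma$ is the source of the base $3$ in the theorem.
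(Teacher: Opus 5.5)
You have a genuine gap: the two steps you flag as obstacles are exactly where the proof has to happen, and neither is closed. Your colouring step is correct; it is the paper's Lemma 1 for primes $\pi\nmid\rho(1+\rho)$. You are also right that $\rho=0$ must be excluded, since the paper tacitly assumes $\rho\neq0$.

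\textbf{The inductive step.} A colour class is not $S\setminus\{\pi\}$-good, because $a+\rho b$ still carries the factor $\pi^{\min(v(a),v(b))}$. So the recursion $T(S)\le 3T(S\setminus\{\pi\})+1$ does not follow, and you give no argument for it. If you strengthen the hypothesis by allowing arbitrary powers of the removed primes, the final class is simply all $S$-good sets again, so nothing is gained. What must be retained is the exact relation $v_\pi(a+\rho b)=\min(v_\pi(a),v_\pi(b))$.

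\textbf{The base case.} Once you retain that relation, your base case no longer applies. Now $a+\rho b$ is an $S_0$-unit times $g_{ab}=\prod_{\pi\ \mathrm{inert}}\pi^{\min(v_\pi(a),v_\pi(b))}$, and $g_{ab}$ depends on the pair. Combining $(a+\rho b)-\rho(b+\rho a)=(1-\rho^2)a$ over the pairs $(a,b)$ and $(a,d)$ gives $g_{ab}(u_1-\rho u_2)=g_{ad}(u_3-\rho u_4)$, which is not an $S_0$-unit equation. Adjoining the inert primes to $S_0$ destroys any bound depending only on $\rho$. You also leave the exponents of the primes of $S_0$ in $a+\rho b$ unbounded, which is why you needed unit-equation theorems at all.

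\textbf{What the paper does instead.} It refines at \emph{every} prime dividing the product, including those dividing $\rho(1+\rho)$. Write $\rho=\pi^{\gamma}\rho_0$ with $\pi\nmid\rho_0$ and $\pi^{\delta}\parallel 1+\rho_0$. The reduced residues modulo $\pi^{\delta+1}$ are $3$-coloured so that $r$ and $-\rho_0 r$ never share a colour, and $a=\pi^{\alpha}a_0$ is coloured by $a_0$. Within a class this gives $v_\pi(a+\rho b)\le\min(v_\pi(a),v_\pi(b))+\gamma+\delta$, at the same cost factor $3$. When $\rho_0=-1$, i.e.\ $\rho=-\pi^{\gamma}$, one instead splits by the parity of $\lfloor v_\pi(a)/\gamma\rfloor$, so that $|v_\pi(a)-v_\pi(b)|\ne\gamma$ within a class.

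No prime is ever removed. Instead, with $g=\gcd(a,b)$, the element $\Phi(a,b)=a/g+\rho b/g$ divides the fixed element $c(\rho)=\prod_{\pi\mid\rho(1+\rho)}\pi^{c(\pi,\rho)}$, where $c(\pi,\rho)=\gamma+\delta$, or $\gamma$ in the exceptional case. Hence after all refinements the pair $(\Phi(a,b),\Phi(b,a))$ takes at most $\tau(c(\rho))^2$ values.

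Now fix $a\ne0$ and pigeonhole over $b$: two $b_i\ne b_j$ give the same pair. The system $x+\rho y=z_1$, $\rho x+y=z_2$ has determinant $1-\rho^2\ne0$, so $a/g_i=a/g_j$. Hence $g_i=g_j$ and $b_i=b_j$, a contradiction. This $\gcd$-normalization is the missing idea: it absorbs all the ``levels'' at once and turns your base case into a finite pigeonhole argument, with no $S$-unit machinery.
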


The above theorem, combined with the multiplicative property of
the norm $N(a+b\omega)=a^2-ab+b^2$
in the ring of Eulerian integers, yields surprising corollaries for
rational integers.

\begin{cor}\lb{cor01}
If $\mathcal{A}\subseteq\mathbb{Z}^+$ is a finite nonempty set, then \[\omega_{\mathbb N}(\prod_{\substack{a,b\in{\mathcal{A}}\\a\neq{b}}} (a^2-ab+b^2))>\frac{\log|\mc{A}|-\log 38}{2\log 3}.
\]
\end{cor}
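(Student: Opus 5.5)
The plan is to deduce Corollary~\ref{cor01} from Theorem~\ref{thm02} with the choice $\rho=\omega$, using multiplicativity of the norm. Since $\mathbb Z^+\subseteq E$, a finite set $\mc A\subseteq\mathbb Z^+$ is also a finite subset of $E$, and for $a,b\in\mc A$ we have
\[
N(a+\omega b)=(a+\omega b)(a+\bar\omega b)=a^2-ab+b^2 .
\]
Put $P=\prod_{a\ne b}(a+\omega b)\in E$. Then $N(P)=\prod_{a\ne b}(a^2-ab+b^2)$ is exactly the rational integer in the corollary. Theorem~\ref{thm02} gives
\[
\omega_E(P)>\frac{\log|\mc A|}{\log 3}-c(\omega),
\]
so what remains is to compare $\omega_E(P)$ with $\omega_{\mathbb N}(N(P))$.

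For the comparison, let $\pi$ be an Euler prime dividing $P$. Then $\pi$ divides $P\bar P=N(P)\in\mathbb Z$, so $\pi$ divides some rational prime $p$ with $p\mid N(P)$. In $E$ every rational prime $p$ is either inert, ramified ($p=3$), or splits as $p=\pi\bar\pi$. Hence $p$ has at most two non-associated Euler prime divisors. This gives a map from the Euler prime divisors of $P$ to the rational prime divisors of $N(P)$ that is at most two-to-one, so
\[
\omega_{\mathbb N}\Big(\prod_{a\ne b}(a^2-ab+b^2)\Big)\ \ge\ \tfrac12\,\omega_E(P)\ >\ \frac{\log|\mc A|-c(\omega)\log 3}{2\log 3}.
\]
The degenerate case $|\mc A|=1$ is trivial: the product is empty and the right-hand side of the corollary is negative.

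The main obstacle is the constant. The corollary requires $c(\omega)\log 3\le\log 38$, that is, $c(\omega)\le \log 38/\log 3$, whereas Theorem~\ref{thm02} as stated only asserts that some $c$ exists depending on $\rho$. I would therefore go back into the proof of Theorem~\ref{thm02} and track the constant explicitly for $\rho=\omega$. I expect that argument to be an Erd\H{o}s--Tur\'an type iteration: repeatedly pass to a large subset of $\mc A$ lying in one residue class modulo a chosen Euler prime, losing a factor of about $3$ each time, and stop when the subset drops below a threshold that depends only on $\rho$.

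For $\rho=\omega$ I would check that this threshold, together with the initial losses, is at most $38$. Specifically, the iteration should run as long as the current set has more than roughly $38$ elements, giving $\omega_E(P)>\log(|\mc A|/38)/\log 3$. If the general proof gives a worse constant, I would redo the base step for $\rho=\omega$ directly. The key fact there is that $a+\omega b$ with $a\ne b$ positive integers is never a unit, since $a^2-ab+b^2\ge ab>1$ unless $\{a,b\}=\{1,1\}$, which is excluded. This lets small cases be handled by hand to reach the constant $38$.
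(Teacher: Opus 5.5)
Your reduction matches the paper's. You apply Theorem~\ref{thm02} with $\rho=\omega$, use $N(a+\omega b)=a^2-ab+b^2$ and multiplicativity of the norm, and note that each rational prime has at most two non-associated Euler prime divisors, so $\omega_{\mathbb N}(N(P))\ge\omega_E(P)/2$. The case $|\mathcal A|=1$ is also handled correctly.

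The gap is the constant, which is the only quantitative content of the corollary beyond this reduction. You need Theorem~\ref{thm02} for $\rho=\omega$ in the explicit form $\omega_E(P)>(\log|\mathcal A|-\log 38)/\log 3$, and your proposal only says you expect the threshold to be $38$. The mechanism you sketch would not deliver it. Taken literally, restricting to a single residue class modulo $\pi$ costs a factor of order $N(\pi)$, not $3$, and $N(\pi)$ is unbounded, so you get no logarithmic bound at all. Nor does the fact that $a+\omega b$ is never a unit, on its own, produce $38$ by ``handling small cases''.

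What is missing comes from Case~1 of the proof of Theorem~\ref{thm02}, where the constant is $\log(\tau(c(\rho))^2+2)/\log 3$. Note that the paper's $c(\rho)$ is an Eulerian integer, not your additive constant.
\begin{enumerate}
\item For each prime divisor $\pi_i$ of $P$, three-colour the reduced residues modulo $\pi_i$ so that $r$ and $-\omega r$ never share a colour (Lemma~\ref{lem01}; this is possible since $1+\omega=-\omega^2$ is a unit).
\item Colour $a=\pi_i^{\alpha}a_0$ by the class of $a_0$ and keep the largest colour class.
\item After $s$ steps you have $|\mathcal A_s|\ge|\mathcal A|/3^s$. Since $\omega(1+\omega)=-1$, every $c(\pi,\omega)=0$, so for $a,b\in\mathcal A_s$ you get $\pi_i^u\mid a+\omega b\Rightarrow\pi_i^u\mid a,b$.
\item Hence $\Phi(a,b)=(a+\omega b)/\gcd(a,b)$ divides $c(\omega)=1$, i.e.\ it is one of the $\tau(1)=6$ units.
\item The pairs $(\Phi(a,b),\Phi(b,a))$ therefore take at most $36$ values. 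With $38$ elements, pigeonhole together with $1-\omega^2\ne0$ forces $b_i=b_j$.
\end{enumerate}
This is where $38=6^2+2$ comes from, and it is exactly what the paper's proof of the corollary invokes.

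Your non-unit observation, inserted at the last step, would in fact close the gap with a better constant. For distinct positive integers, $\Phi(a,b)$ is associated to $a'+\omega b'$ with $a'\ne b'$ positive integers. Its norm is $(a'-b')^2+a'b'\ge 3$, so it is not a unit, and already $|\mathcal A_s|\ge 2$ is contradictory. But this still requires the iteration above, which your proposal does not supply.
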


\begin{cor}\lb{cor02}

If $\mathcal{A}\subseteq\mathbb{Z}^+$ is a finite nonempty set, then \[\omega_{\mathbb N}(\prod_{\substack{a,b\in{\mathcal{A}}\\a\neq{b}}} (a^2+ab+b^2))>\frac{\log|\mc{A}|-\log 146}{2\log 3}.
\]
\end{cor}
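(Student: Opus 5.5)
The plan is to deduce the statement from Theorem~\ref{thm02} with the specific choice $\rho=-\omega$, by passing from Euler primes to rational primes through the norm. The constant $c$ must be read off explicitly from the proof of Theorem~\ref{thm02}. First, since $N(x+y\omega)=x^2-xy+y^2$, taking $x=a$ and $y=-b$ gives
\[
N(a-\omega b)=a^2+ab+b^2 .
\]
So for $a,b\in\mathcal{A}\subseteq\mathbb{Z}^+\subseteq E$ we have $a^2+ab+b^2=N(a+\rho b)$ with $\rho=-\omega$. This is the same device as for Corollary~\ref{cor01}, which uses $\rho=\omega$. Each factor is nonzero, because $a-\omega b=0$ with $a,b$ positive integers is impossible. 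By multiplicativity of the norm,
\[
\prod_{a\ne b}(a^2+ab+b^2)=N\Bigl(\prod_{a\ne b}(a-\omega b)\Bigr).
\]

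Second, I would compare $\omega_E$ of an Eulerian integer $x\ne 0$ with $\omega_{\mathbb N}$ of its norm. If an Euler prime $\pi$ divides $x$, then $N(\pi)\mid N(x)$, and $N(\pi)$ is either a rational prime $p$ or $p^2$ for a rational prime $p$. Conversely, every rational prime $p$ has at most two non-associate Euler prime divisors:
\begin{itemize}
\item $p$ inert: only $p$ itself;
\item $p=3$ ramified: only $1-\omega$;
\item $p\equiv 1 \pmod 3$ split: $\pi$ and $\bar\pi$.
\end{itemize}
Hence the map $\pi\mapsto$ (rational prime under $\pi$) is at most two-to-one from the prime divisors of $x$ into the prime divisors of $N(x)$. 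This gives $\omega_E(x)\le 2\,\omega_{\mathbb N}(N(x))$.

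Combining the two steps with Theorem~\ref{thm02} for $\rho=-\omega$ yields
\[
\omega_{\mathbb N}\Bigl(\prod_{a\ne b}(a^2+ab+b^2)\Bigr)\ \ge\ \tfrac12\,\omega_E\Bigl(\prod_{a\ne b}(a-\omega b)\Bigr)\ >\ \frac{\log|\mathcal{A}|-c\log 3}{2\log 3}.
\]
It remains to check that the proof of Theorem~\ref{thm02}, specialized to $\rho=-\omega$, gives a constant with $c\log 3\le\log 146$, that is, $3^{c}\le 146$.

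This last verification is the main obstacle, since Theorem~\ref{thm02} as stated only asserts existence of $c$. I would go back into its proof, which presumably proceeds as in Erdős--Turán: the prime set is fixed, and a counting or pigeonhole argument over residue classes modulo the primes loses a bounded factor depending on $\rho$, for instance through the units of $E$, the finitely many small-norm exceptions, or the primes dividing $\rho$ or $1+\rho$. I would then track that factor for $\rho=-\omega$. Here $1+\rho=1-\omega$ has norm $3$, in contrast with $N(1+\omega)=1$ for Corollary~\ref{cor01}. This difference is what I expect accounts for the larger constant $146$ versus $38$, and I would confirm that the explicit bookkeeping gives $|\mathcal{A}|\le 146\cdot 3^{\,\omega_E}$.
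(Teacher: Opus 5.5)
Your reduction is the same as the paper's. You apply Theorem~\ref{thm02} with $\rho=-\omega$, use $N(a-\omega b)=a^2+ab+b^2$ together with multiplicativity of the norm, and observe that at most two non-associate Euler primes lie over any rational prime, so $\omega_{\mathbb N}\ge\omega_E/2$. These steps are correct. Your remark that no factor $a-\omega b$ vanishes is a detail the paper leaves implicit.

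\textbf{The gap.} You never actually obtain the constant $146$, and that is the only content of the corollary beyond the norm argument. Theorem~\ref{thm02} as stated gives only an unspecified $c$, so $3^{c}\le 146$ must be extracted from its proof. Your proposal stops at guessing where the loss comes from.

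\textbf{What the proof of Theorem~\ref{thm02} (Case~1) gives.} It yields
\[
\omega_E\Bigl(\prod_{a\ne b}(a+\rho b)\Bigr)>\frac{\log|\mathcal A|-\log\bigl(\tau(c(\rho))^2+2\bigr)}{\log 3}.
\]
The two losses arise as follows.
\begin{itemize}
\item Each prime costs a factor $3$, through the three-colouring of residue classes in Lemma~\ref{lem01}, applied via Lemma~\ref{lem02}.
\item The additive loss comes from the final pigeonhole on the pairs $(\Phi(a,b),\Phi(b,a))$, which needs $\tau(c(\rho))^2+2$ surviving elements. By Lemma~\ref{lem03}, each $\Phi(a,b)$ divides $c(\rho)=\prod_{\pi\mid\rho(1+\rho)}\pi^{c(\pi,\rho)}$. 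In $\tau$, divisors differing by a unit are counted separately.
\end{itemize}

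\textbf{The computation for $\rho=-\omega$.} First check that Case~1 applies: $\rho\ne\pm1$, and $-\rho=\omega$ is a unit, not a positive power of a prime. Then:
\begin{itemize}
\item $\rho$ is a unit, so $\gamma=0$ for every $\pi$.
\item $1+\rho=1-\omega$ is the prime of norm $3$. So $\delta=1$ for its associate $2+\omega=(3+\sqrt3 i)/2\in\mathcal P$, and $c(-\omega)$ is this prime.
\item Its divisors are the $6$ units and the $6$ associates of the prime. Hence $\tau(c(-\omega))=12$ and $12^2+2=146$.
\end{itemize}

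Your intuition about the comparison with Corollary~\ref{cor01} is right. There $1+\omega=-\omega^2$ is a unit, so $c(\omega)=1$, $\tau=6$, and $6^2+2=38$. But without this explicit bookkeeping, your argument proves the corollary only with an unspecified constant in place of $\log 146$.
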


These two corollaries motivated us to ask
whether the Erdős-Turán theorem
can be generalized to arbitrary two-variable polynomials and two different sets. We conjecture the following:

\begin{conj}\lb{conj01}
If $f\in\mathbb Z[x,y]$ is a two-variable polynomial that is not decomposable as $f(x,y)=g(x)h(y)$,
where $g,h\in\mathbb Z[x]$, then there exists a constant $c$ that depends only on the polynomial $f$, such that if
$\mc A,\ \mc B\subseteq\mathbb Z^+$ are finite sets and
$|\mc{A}|\geq|\mc{B}|\geq 2$, then
\[
\omega_{\mathbb N}(\prod_{a\in{\mc A},b\in{\mc B}}(f(a,b)))\geq{c\log|\mc A|}.
\]
\end{conj}

However, while we could not prove this conjecture in its full generality, we were able to handle an important special case:

\begin{thm}\lb{thm03}
Let $n\geq2$ be a positive integer, $f(x,y)=\sum_{i=1}^{n-1} r_{i} x^{m_i}y^{i-1}+r_ny^{n-1}\in\mathbb Z[x,y]$
be a polynomial where
$m_1$,$m_2$,\dots,$m_{n-1}$ are nonnegative integers and the coefficients $r_{i}$ are positive. Then there exists an effectively computable positive constant $c$ such that if
$\mc A$ and $\mc B\subseteq\mathbb Z^{+}$ are finite sets, where $|\mc A|\ge|\mc B|\ge 2n-2$, then
\[
\omega_{\mathbb N}\Big(\prod_{a\in\mc A,
b\in\mc B}(f(a,b)))\Big)
\geq{c\log|\mc{A}|}.
\]
\end{thm}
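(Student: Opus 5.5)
The plan is to turn the statement into a counting problem for solutions of a linear $S$-unit equation, and then use the Diophantine lemma of Győry, Sárközy and Stewart mentioned in the abstract. I take that lemma to bound the number of solutions of an equation $\sum_{j} c_j u_j=C$ with fixed nonzero coefficients and $C\ne 0$, in $S$-units $u_j$, by $\exp(c_0 s)$ with $c_0=c_0(n)$ and $s=|S|$, possibly under side conditions on the solutions. Let $S$ be the set of primes dividing $\prod_{a\in\mc A,b\in\mc B}f(a,b)$ and $s=|S|$. Every $r_i$ is positive and $a,b\ge 1$, so each $f(a,b)$ is a positive integer and hence an $S$-unit. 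The goal is the inequality $|\mc A|\le \exp(c_1 s)$, which gives $s\ge c\log|\mc A|$ with an effective constant.

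\textbf{Step 1 (eliminating $a$).} Fix $a\in\mc A$ and put $X_i=r_ia^{m_i}$ for $1\le i\le n-1$. Then $f(a,b)-r_nb^{n-1}=\sum_{i=1}^{n-1}X_ib^{i-1}$ is a polynomial of degree at most $n-2$ in $b$. Take $n$ distinct elements $b_1,\dots,b_n\in\mc B$. Lagrange interpolation, equivalently the Vandermonde cofactors, gives weights $c_j=1/\prod_{k\ne j}(b_j-b_k)$. These annihilate every polynomial of degree at most $n-2$ and satisfy $\sum_j c_jb_j^{n-1}=1$. Writing $u_j=f(a,b_j)$, we get
\[
\sum_{j=1}^n c_j u_j = r_n \neq 0 .
\]
After clearing denominators, this is a linear $S$-unit equation whose coefficients depend only on $b_1,\dots,b_n$ and not on $a$. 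The right-hand side is nonzero exactly because of the pure term $r_ny^{n-1}$.

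\textbf{Step 2 (injectivity).} Given the values $u_1,\dots,u_{n-1}$, Vandermonde inversion recovers $X_1,\dots,X_{n-1}$. If some $m_i\ge1$, then $X_i=r_ia^{m_i}$ determines $a>0$, so $a\mapsto(u_1,\dots,u_n)$ is injective. If all $m_i=0$, then $f$ does not depend on $x$ and the product has only boundedly many factors. That case must be excluded, or absorbed into the constant using the hypotheses. Injectivity gives $|\mc A|\le$ the number of $S$-unit solutions of the equation in Step 1.

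\textbf{Step 3 (degenerate solutions), the main obstacle.} The lemma bounds only solutions with no vanishing subsum. A solution $(u_j)$ may have $\sum_{j\in J}c_ju_j=0$ for some proper $J$, and such partial equations have infinitely many solutions, for instance by scaling. This is where $|\mc B|\ge 2n-2$ should enter, so my first attempt is the following. Fix $2n-2$ elements of $\mc B$ and apply Step 1 to many $n$-subsets of them, namely all $n$-subsets sharing a common core.

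\begin{itemize}
\item A vanishing subsum $\sum_{j\in J}c_ju_j=0$ with $|J|\le n-1$ is itself a nontrivial linear relation among values of the polynomial $\sum_i X_ib^{i-1}+r_nb^{n-1}$ at at most $n-1$ points.
\item Combining such relations over the different $n$-subsets should force $(X_1,\dots,X_{n-1})$ into a lower-dimensional family. On that family, a similar equation with fewer unknowns and nonzero right-hand side holds.
\item Positivity of the $r_i$, and hence of the $u_j$, together with the sign pattern of the $c_j$ (which alternate in sign when $b_1<\dots<b_n$) should rule out the remaining degenerate configurations.
\end{itemize}

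I would organise this as an induction on the number of unknowns, with the count of $n$-subsets still available drawn from the $2n-2$ fixed elements. Each degenerate case then contributes at most $\exp(c_2 s)$ values of $a$.

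Summing over the boundedly many cases gives $|\mc A|\le \exp(c_1 s)$, and hence the theorem.
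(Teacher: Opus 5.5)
\textbf{There is a genuine gap in Step 3.} This step is the whole difficulty, and what you give there is a list of hopes (``should force'', ``should rule out'', an induction you ``would organise'') rather than an argument. The one concrete mechanism you offer is also false. For $b_1<\dots<b_n$ the weight $c_j$ has the sign of $(-1)^{n-j}$, so for adjacent indices $c_ju_j+c_{j+1}u_{j+1}=0$ is compatible with $u_j,u_{j+1}>0$. Positivity therefore does not exclude vanishing subsums.

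You have also misidentified the Győry--Sárközy--Stewart result the paper uses. It is not an $S$-unit counting lemma but the vector statement (Theorem D): let $\mathcal A,\mathcal B\subset(\mathbb Z^+)^n$ with $|\mathcal A|\ge|\mathcal B|\ge 2n-2$, every vector of $\mathcal A$ having last coordinate $1$, and any $n$ vectors of $\mathcal B\cup\{(0,\dots,0,1)\}$ linearly independent; then $\omega_{\mathbb N}(\prod\mathbf a\cdot\mathbf b)>c\log|\mathcal A|$. The paper embeds $x\mapsto(r_1x^{m_1},\dots,r_{n-1}x^{m_{n-1}},1)$ and $y\mapsto(1,y,\dots,y^{n-2},r_ny^{n-1})$, so that $\mathbf a\cdot\mathbf b=f(x,y)$. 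It then checks independence with two Vandermonde determinants, with and without $(0,\dots,0,1)$. The hypothesis $|\mathcal B|\ge 2n-2$ and all treatment of degenerate solutions live inside that black box. You are in effect re-proving Theorem D in a special case and leaving its hard part open.

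\textbf{How to close the gap.} Your route can be completed, more easily than you expect, using one observation. If some $m_i\ge1$, then for each fixed $b\ge1$ the map $x\mapsto f(x,b)$ is strictly increasing on $\mathbb Z^+$, since all coefficients are positive. So a \emph{single} coordinate $u_j=f(a,b_j)$ determines $a$.

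Given the solution of $\sum_j c_ju_j=r_n$, take $J'\subseteq\{1,\dots,n\}$ minimal with $\sum_{j\in J'}c_ju_j=r_n$. Then $J'\neq\emptyset$ because $r_n\neq0$, and minimality means $(u_j)_{j\in J'}$ has no vanishing subsum. For each of the fewer than $2^n$ choices of $J'$, the non-degenerate solutions number at most $\exp(C(n)(s+1))$. This needs a bound uniform in the coefficients, since the $c_j$ depend on $\mathcal B$; Evertse's bound and the Evertse--Schlickewei--Schmidt bound qualify. The map $a\mapsto(J',(u_j)_{j\in J'})$ is injective, so $|\mathcal A|\le 2^n\exp(C(n)(s+1))$. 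Since $f(a,b)\ge r_1+r_n\ge2$, we have $s\ge1$, and this gives $s\ge c\log|\mathcal A|$ with an effective $c$.

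\textbf{Comparison with the paper.} Completed this way, your proof is self-contained and needs only $|\mathcal B|\ge n$. The price is invoking the quantitative $S$-unit bounds directly, where the paper cites Theorem D and verifies linear independence.

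\textbf{The case all $m_i=0$.} This case cannot be ``absorbed into the constant''. There $\prod_{a,b}f(a,b)=\bigl(\prod_{b\in\mathcal B}f(1,b)\bigr)^{|\mathcal A|}$, so $\omega_{\mathbb N}$ stays bounded as $|\mathcal A|$ grows and the statement is false. The paper's proof silently uses $|\mathcal A'|=|\mathcal A|$, which fails exactly here. The hypothesis that some $m_i\ge1$ has to be added, not merely handled by your argument.
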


The proof of Theorem \ref{thm03} 
relies on a theorem of
Győry, Sárközy and Stewart \cite{GyoryStewartSarkozy} and utilizes \mbox{Vandermonde} determinants.
Interestingly, this general theorem implies a lower bound of logarithmic magnitude, similar to Corollary \ref{cor02} (but not to Corollary \ref{cor01}).

If $f(a,b)=a^2+ab+b^2$, we used a Python program to examine the smallest possible values of $\omega_{\mathbb{N}}\left(\prod_{\substack{a,b\in\mathcal{A}, a\neq{b}}}(a^2+ab+b^2)\right)$ for sets of positive integers $\mathcal{A}$ with $3-8$ elements. Our analysis focused on sets whose
elements were bounded by a few hundred, specifically considering primitive sets (where the greatest common divisor of all elements is $1$).
The Table \ref{Table1} shows our such results.

\begin{table}[H]
\begin{tabular}{ |m{2cm} | m{2.5cm} | m{3cm} | m{6cm} |}
\hline
Size of $\mathcal{A}$ & Maximal allowed element & Minimum number of different prime divisors & Examples \\\hline
3 & 400 & 3 & 28868 pcs, e.g. \{1,2,3\}, \{1,2,4\}, \{388,395,399\} \\\hline
4 & 400 & 4 & 5 pcs: \{1,2,4,8\}, \{1,3,9,18\}, \{1,3,9,27\}, \{1,4,16,22\}, \{1,9,15,18\} \\\hline
5 & 200 & 5 & 2 pcs: \{1,2,4,8,16\}, \{1,3,9,27,81\} \\\hline
6 & 200 & 6 & 1 pc: \{1,2,4,8,16,32\} \\\hline
7 & 150 & 7 & 1 pc: \{1,2,4,8,16,32,64\}\\\hline
8 & 100 & 9 & 3 pcs, e.g.: \{2,3,4,6,9,12,18,36\} \\\hline
\end{tabular}
\caption{The smallest possible values of $\omega_{\mathbb{N}}\left(\protect\prod_{\protect\substack{a,b\in\mathcal{A}, a\neq{b}}}(a^2+ab+b^2)\right)$ for special sets $\mathcal{A}$ with $3-8$ elements}
\label{Table1}
\end{table}

As a continuation of this work, we plan to generalize Theorem \ref{thm03} to an arbitrary homogeneous polynomial $f(x,y)$, focusing initially on the single-set problem where we seek the bound
\[
\omega_{\mathbb N}\left(\prod_{a,b\in\mathcal{A}} (f(a,b))\right)
\geq c\log|\mathcal{A}|.
\]
However, the full proof will involve significant further complications
and will be presented in a subsequent paper.

\section{Proofs}

\bigskip\noindent\textbf{Proof of Theorem \ref{thm01}.}
Let us plot the elements of the set $\mc A$ on the complex plane.
The lines $y=0$, $y=\sqrt{3}x$ and $y=-\sqrt{3}x$
divide $\mathbb{C}\backslash\{0\}$ into six half-open, half-closed sectors; among these, one must contain at least
$(\ab{\mc A}-1)/6$ elements of $\mc A$.
Let $\mc A_0$ be the subset of $\mc A$ containing these elements. Thus,
\[
\mc A_0\subset\mc A,\ \ \ |\mc A_0|\ge \dfrac{|\mc A|-1}{6}.
\]
Figure \ref{fig01} shows the partition into six parts.

\begin{figure}[h]
    \centering
    \includegraphics[scale=0.5]{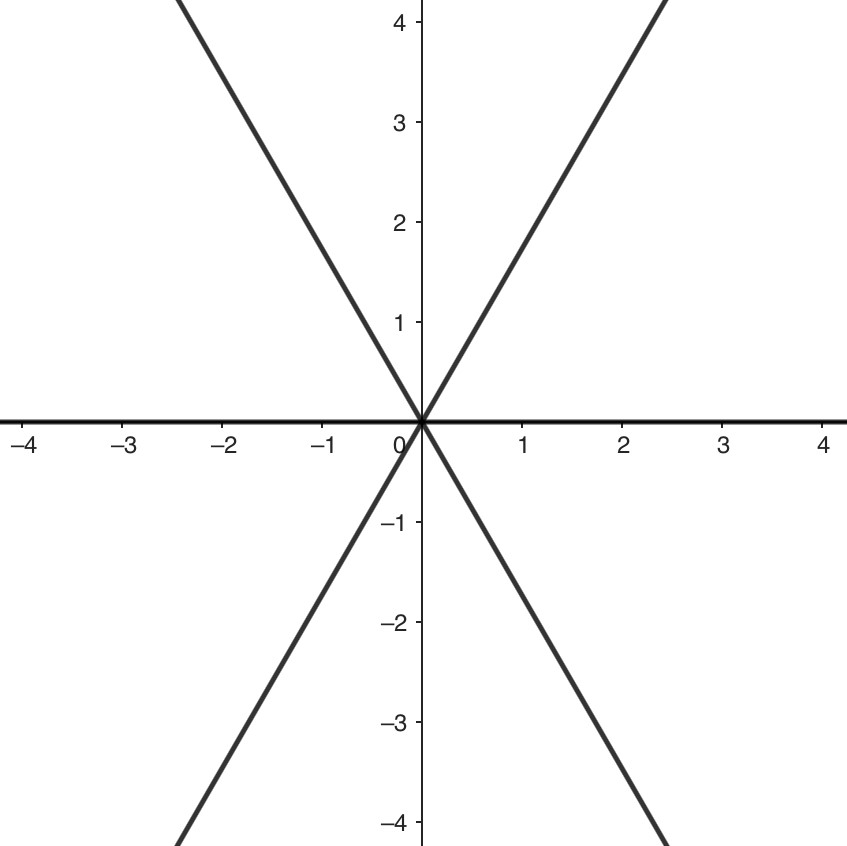}
    \caption{The partition of $\mathbb{C}\backslash\{0\}$ into six parts}
    \label{fig01}
\end{figure}

For $x\in\mathbb C$, denote the absolute value of $x$ by $|x|$.
Then, the norm of an Eulerian integer $a\in E$ is $N(a)=|a|^2$.
Since the angle between vectors of any two Eulerian integers in $\mc A_0$ is an acute angle, by the law of sines for $a,b\in\mc A_0$
\begin{align}
|a+b| >\max\{|a|,|b|\}.\lb{har01}
\end{align}

Since $\mc A_0\subseteq\mc A$,
\[
\omega_E(\prod_{\substack{a,b\in\mc A\\a\ne b}} (a+b))
\ge \omega_E(\prod_{\substack{a,b\in\mc A_0\\a\ne b}} (a+b))
\]
Let us list the odd-norm Euler prime divisors of $\prod_{\substack{a,b\in\mc A\\ a\ne b}} (a+b)$ by choosing exactly one of the prime divisors associated with each other (arbitrarily).

Let these be
$\pi_1,\pi_2,\pi_3,\dots,\pi_s$.
We prove that
\begin{align}
s> \dfrac{\log (|\mc A|-1)-\log 18}{\log 2}.
\lb{s01}
\end{align}
This implies Theorem \ref{thm01}.
We then prove \eqref{s01} by contradiction.
Suppose that
\begin{align}
s\leq\dfrac{\log (|\mc A|-1)-\log 18}{\log 2}.
\lb{s02}
\end{align}

We recursively define a sequence of sets
\[
\mc A_0\supseteq\mc A_1\supseteq\mc A_2\supseteq \dots \supseteq \mc A_s
\]
such that
\begin{align}
|\mc A_{i+1}|\ge \dfrac{|\mc A_i|}{2}
\lb{large}
\end{align}
if $0\le i\le s-1$, and
\begin{align}
a,b\in\mc A_{i+1},\ \alpha\in\mathbb N,
\pi_{i+1}^{\alpha}\mid a+b \ \ \ \Leftrightarrow
\ \ \pi_{i+1}^{\alpha}\mid a,\ \textup{ and } \pi_{i+1}^{\alpha}\mid b.
\lb{kov1}
\end{align}
To do this, we first divide the mod $\pi_{i+1}$ reduced residue classes
into two groups, $U_i$ and $V_i$, such that
a reduced residue class $n$ and its additive inverse $-n$
never fall into the same group
(since $2\nmid{\pi_{i+1}}$ they are different reduced residue classes).
We then partition the set $\mc A_i$ into two subsets
$\mc A_{i,0}$ and $\mc A_{i,1}$, by writing each element $a\in\mc A_i$ in the form
$a=\pi_{i+1}^\gamma a_0$, where $\pi_{i+1} \nmid a_0$.
Let $a\in\mc A_{i,0}$ if $a_0\in U_i$ and
$a\in\mc A_{i,1}$ if $a_0\in V_i$.
It is clear that, for $j\in\{0,1\}$, we have
\begin{align}
a,b\in\mc A_{i,j},\ \alpha\in\mathbb N,
\pi_{i+1}^{\alpha}\mid a+b \ \ \ \Leftrightarrow
\ \ \ \pi_{i+1}^{\alpha}\mid a,\ \textup{ and } \pi_{i+1}^{\alpha}\mid b,
\lb{kov2}
\end{align}
since if $a=\pi_{i+1}^{\gamma}a_0,\ b=\pi_{i+1}^{\delta}b_0$
(where $\pi_{i+1}\nmid a_0,b_0$) we consider two cases.

\bigskip
\noindent\textbf{Case 1:}
$\gamma\ne \delta$. Then $\pi_{i+1}^{\min\{\gamma,\delta\}}\mid a+b$,
but $\pi_{i+1}^{\min\{\gamma,\delta\}+1}\nmid a+b$. 
This implies \eqref{kov1}.

\bigskip
\noindent\textbf{Case 2:}
$\gamma=\delta$. Then
\[
a+b=\pi_{i+1}^{\gamma} (a_0+b_0),
\]
where $a_0,b_0\in U_i$ or $a_0,b_0\in V_i$. Due to the definition of the sets $U_i$ and
$V_i$, it is impossible that
$a_0\not\equiv -b_0\mod{\pi_{i+1}}$, i.e.
$\pi_{i+1}\nmid a_0+b_0$. That is, $\pi_{i+1}^{\gamma}\mid a+b$, but
$\pi_{i+1}^{\gamma+1}\nmid a+b$. Since
$\pi_{i+1}^{\gamma}\mid a,b$, this also verifies
\eqref{kov1}.

Next, we turn to the recursive definition of the sets
$\mc A_0\supseteq\mc A_1\supseteq\mc A_2\supseteq \dots \supseteq \mc A_s$
. If the sets
$\mc A_0\supseteq\mc A_1\supseteq\mc A_2\supseteq \dots \supseteq \mc A_i$
are already constructed with the desired property, then let
$\mc A_{i+1}$ simply be the set with the larger number of elements between $\mc A_{i,0}$ and $\mc A_{i,1}$. If the numbers of elements are equal,
then we can arbitrarily choose
which subset should be $\mc A_{i+1}$.
Then both \eqref{large} and \eqref{kov1} are satisfied.

For all odd-norm Euler primes $\pi_1,\pi_2,\dots,\pi_s$
we have that
\begin{align}
a,b\in\mc A_s,\ \alpha\in\mathbb N,
\pi_{i}^{\alpha}\mid a+b \ \ \ \Leftrightarrow
\ \ \ \pi_{i}^{\alpha}\mid a,\ \textup{ and } \pi_{i}^{\alpha}\mid b,
\lb{kov3}
\end{align}
and, by \eqref{s02} and \eqref{large},
\[
|\mc A_s|\ge \dfrac{|\mc A_0|}{2^s}
\ge\dfrac{|\mc A|-1}{6\cdot 2^s}
\ge 3.
\]
Let $a,b,c$ be three distinct elements of $\mc A_s$.
Let us write the prime factorization of $a+b,a+c,b+c$ in the ring of
Eulerian integers:
\begin{align*}
a+b &= \varepsilon_1 2^{\gamma_0}
\pi_1^{\gamma_1}\pi_2^{\gamma_2}\dots \pi_s^{\gamma_s},\\
a+c &= \varepsilon_2 2^{\beta_0}
\pi_1^{\beta_1}\pi_2^{\beta_2}\dots \pi_s^{\beta_s},\\
b+c &= \varepsilon_3 2^{\alpha_0}
\pi_1^{\alpha_1}\pi_2^{\alpha_2}\dots \pi_s^{\alpha_s}.
\end{align*}

It is clear that, by \eqref{kov3},
\[
\pi_1^{\gamma_1}\pi_2^{\gamma_2}\dots \pi_s^{\gamma_s}\mid a,b.
\]
Furthermore, we must have
\[
2^{\gamma_0}\nmid a, b.
\]
To see this, suppose, for example, that $2^{\gamma_0}\mid a$.
Then, since we already established that
$\pi_1^{\gamma_1}\pi_2^{\gamma_2}\dots \pi_s^{\gamma_s}\mid a$, we would have
$2^{\gamma_0}\pi_1^{\gamma_1}\pi_2^{\gamma_2}\dots \pi_s^{\gamma_s}\mid a$,
and consequently
\[
|a+b|=|2^{\gamma_0}\pi_1^{\gamma_1}\pi_2^{\gamma_2}\dots \pi_s^{\gamma_s}|
\le |a|,
\]
which contradicts \eqref{har01}. Similarly, we can show that
$2^{\gamma_0}\nmid b$.
Then, the exponent of $2$ in the prime factorization of $a$ and $b$ must be the same. Otherwise
$2^{\gamma_0} \nmid a+b$.
Consequently, the exponent of $2$ is identical in $a, b,$ and $c$.
Let
\begin{align*}
a=2^ta_1,\\
b=2^tb_1,\\
c=2^tc_1,
\end{align*}
where the norms of the Eulerian integers $a_1,b_1,c_1$ are odd.
Substituting these into the factorizations above, we get:
\begin{align*}
a_1+b_1&= \varepsilon_1 2^{\gamma_0-t}
\pi_1^{\gamma_1}\pi_2^{\gamma_2}\dots \pi_s^{\gamma_s}\\
a_1+c_1 &= \varepsilon_2 2^{\beta_0-t}
\pi_1^{\beta_1}\pi_2^{\beta_2}\dots \pi_s^{\beta_s}\\
b_1+c_1 &= \varepsilon_3 2^{\alpha_0-t}
\pi_1^{\alpha_1}\pi_2^{\alpha_2}\dots \pi_s^{\alpha_s}.
\end{align*}
where $\pi_1^{\gamma_1}\pi_2^{\gamma_2}\dots \pi_s^{\gamma_s}\mid a_1,b_1$.
So
\begin{align*}
|a_1|,|b_1| \ge |\pi_1^{\gamma_1}\pi_2^{\gamma_2}\dots \pi_s^{\gamma_s}|.
\end{align*}

We prove that $\gamma_0-t\geq2$. We consider two cases.

\bigskip\noindent
\textbf{Case 1:}
$|a_1|\neq|b_1|$. By symmetry,
we may assume that $|a_1|>|b_1|$. So,
\begin{align*}
|a_1| &> |\pi_1^{\gamma_1}\pi_2^{\gamma_2}\dots \pi_s^{\gamma_s}|\\
|b_1| &\ge |\pi_1^{\gamma_1}\pi_2^{\gamma_2}\dots \pi_s^{\gamma_s}|.
\end{align*}
If the norm of $x\in E$ Eulerian integer is denoted by $N(x)=|x|^2$, then
\[
N(a_1)>N(\pi_1^{\gamma_1}\pi_2^{\gamma_2}\dots \pi_s^{\gamma_s}),
\]
but $\pi_1^{\gamma_1}\pi_2^{\gamma_2}\dots \pi_s^{\gamma_s}\mid a_1$, so
$N(\pi_1^{\gamma_1}\pi_2^{\gamma_2}\dots \pi_s^{\gamma_s})\mid N(a_1)$,
i.e.,
\begin{align*}
N(a_1) &\ge 2N(\pi_1^{\gamma_1}\pi_2^{\gamma_2}\dots \pi_s^{\gamma_s})\\
|a_1| &\ge \sqrt{2} |\pi_1^{\gamma_1}\pi_2^{\gamma_2}\dots \pi_s^{\gamma_s}|.
\end{align*}
Since $a,b\in\mc A_0$, the angle between $a$ and $b$ is $\gamma <60^{\circ}$,
and thus, the angle between $a_1$ and $b_1$ is also $\gamma < 60^{\circ}$.
By the law of cosines,
\begin{align*}
|a_1+b_1|^2 &= |a_1|^2+|b_1|^2+2\cos\gamma |a_1|\cdot |b_1|\\
&\ge |a_1|^2+|b_1|^2+2\cos 60^{\circ} |a_1|\cdot |b_1|\\
&= |a_1|^2+|b_1|^2+ |a_1|\cdot |b_1|\\
&\ge (1+\sqrt{2}^2+1\cdot\sqrt{2})
|\pi_1^{\gamma_1}\pi_2^{\gamma_2}\dots \pi_s^{\gamma_s}|^2\\
&> 4 |\pi_1^{\gamma_1}\pi_2^{\gamma_2}\dots \pi_s^{\gamma_s}|^2,
\end{align*}
which, in the first case, gives $\gamma_0-t\ge 2$.

\bigskip\noindent
\textbf{Case 2:}
$|a_1|=|b_1|$. In this case,
the angle $\gamma$ between $a_1$ and $b_1$ falls in the interval $(0^\circ,60^\circ)$, the law of cosines yields
 $$\sqrt3|a_1|<|a_1+b_1|<2|a_1|.$$
Now $|a_1+b_1|=2^{\gamma_0-t}|\pi_1^{\gamma_1}\pi_2^{\gamma_2}\dots \pi_s^{\gamma_s}|$, which leads to a contradiction in the case of $|a_1|=|b_1|=|\pi_1^{\gamma_1}\pi_2^{\gamma_2}\dots \pi_s^{\gamma_s}|$. Thus $|\pi_1^{\gamma_1}\pi_2^{\gamma_2}\dots \pi_s^{\gamma_s}|^2\mid|a_1|^2$ implies $$|a_1|\geq\sqrt2|\pi_1^{\gamma_1}\pi_2^{\gamma_2}\dots \pi_s^{\gamma_s}|.$$
Then $|a_1+b_1|\geq\sqrt3|a_1|\geq\sqrt6|\pi_1^{\gamma_1}\pi_2^{\gamma_2}\dots \pi_s^{\gamma_s}|$; thus, $$|a_1+b_1|^2=2^{2(\gamma_0-t)}|\pi_1^{\gamma_1}\pi_2^{\gamma_2}\dots \pi_s^{\gamma_s}|^2>2^2|\pi_1^{\gamma_1}\pi_2^{\gamma_2}\dots \pi_s^{\gamma_s}|^2$$
and thus $\gamma_0-t\geq2$. Similarly, $\beta_0-t\ge 2$ and
$\alpha_0-t\ge 2$. So, $a_1+b_1$, $a_1+c_1$ and $b_1+c_1$
are numbers divisible by $2^2=4$ (in the ring $E$), i.e.,
\begin{align*}
a_1+b_1 &= 4x,\\
a_1+c_1 &= 4y,\\
b_1+c_1 &= 4z,
\end{align*}
where $x,y,z$ are Eulerian integers. This shows that $a_1=2(x+y-z)$ is divisible by $2$, i.e., $a_1$ is twice an Eulerian integer. This contradicts the assumption that the norm of $a_1$ is odd.
Since our initial assumption \eqref{s02} leads to a contradiction,
the original statement of Theorem \ref{thm01} follows.

\bigskip
\noindent\textbf{Proof of Theorem \ref{thm02}.}
\bigskip In the case $\rho=-1$, if for $\pi$ an Euler prime such that $|\mathcal{A}|\geq{|\pi|^2+1}$, then there are two distinct Eulerian integers in $\mathcal{A}$ whose difference is divisible by $\pi$. Thus, $\omega_E(\prod_{\substack{a,b\in{\mathcal{A}}\\a\neq{b}}}(a-b))$ is at least the number of pairwise not associated Euler primes whose absolute value not greater than $\sqrt{|\mathcal{A}|-1}$, which is at least the number of rational
primes not greater than $\sqrt{|\mathcal{A}|-1}$. This can be estimated from below for sufficiently large sets $\mathcal{A}$ by the positive constant multiple of $\frac{\sqrt{|\mathcal{A}|-1}}{\log(\sqrt{|\mathcal{A}|-1})}$, from which the theorem can be stated for this case as well.
We can assume that
\begin{align*}
\rho \ne 1\ \ \ \ \textup{and}\ \ \ \ \rho\ne -1
\end{align*}
the theorem has already been proven for $\rho=1$
(see Theorem \ref{thm01}).

\bigskip
Every Euler prime has an associated $\pi$ such that
$\textup{Arg}(\pi)\in [0^\circ,60^\circ)$. Henceforth, we will
focus only on these associates, and let $\mc P$ be the set of
all such
Euler primes. Thus,
\[
\mc P=\{\pi:\ \pi\textup{ is an Euler prime and } \textup{Arg}(\pi)
\in [0^\circ,60^\circ) \}.
\]
The following lemma is the cornerstone of the proof of the theorem.

\begin{lem}\lb{lem01}
Let $\rho_0\in E$ and $\pi$ be an Euler prime
such that $\pi\nmid \rho_0$,
$\delta\in\mathbb N$, where $\pi^{\delta}\mid 1+\rho_0$,
but $\pi^{\delta+1}\nmid 1+\rho_0$.
Then, the reduced residue classes mod $\pi^{\delta+1}$ can be
partitioned into three disjoint groups,
$C_1, C_2, C_3$, such that for $1\le i\le 3$,
\[
a,b\in C_i\ \ \ \Rightarrow \ \ \ \pi^{\delta+1}\nmid a+\rho_0b.
\]
\end{lem}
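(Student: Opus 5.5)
The plan is to turn the statement into a graph-colouring problem for a single permutation of the reduced residue classes. Write $q=\pi^{\delta+1}$ and let $R$ be the set of reduced residue classes mod $q$. Since $\pi\nmid\rho_0$, the class of $\rho_0$ is a unit mod $q$, and so is $-\rho_0$. Hence the map
\[
\sigma:R\to R,\qquad \sigma(x)\equiv -\rho_0 x \pmod{q}
\]
is a well-defined bijection of $R$. For $a,b\in R$ we have $q\mid a+\rho_0 b$ if and only if $a=\sigma(b)$. So it suffices to partition $R$ into three classes such that no class contains a pair $(a,b)$ with $a=\sigma(b)$. Equivalently, we need a proper $3$-colouring of the graph $G$ on $R$ whose edges are $\{b,\sigma(b)\}$.

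The first step is to rule out loops, i.e. fixed points of $\sigma$; this is also what handles the case $a=b$ inside one class. Suppose $\sigma(b)=b$ for some $b\in R$. Then $q\mid (1+\rho_0)b$. Since $b$ is coprime to $\pi$, this gives $\pi^{\delta+1}\mid 1+\rho_0$, contradicting the hypothesis $\pi^{\delta+1}\nmid 1+\rho_0$. Hence $\sigma$ has no fixed points. This is the only place the hypothesis on $\delta$ is used. The assumption $\pi^\delta\mid 1+\rho_0$ plays no role here; it only matters for the later application in the proof of Theorem~\ref{thm02}.

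Next, since $\sigma$ is a permutation of the finite set $R$, it decomposes into disjoint cycles, and by the previous step each cycle has length at least $2$. Every edge of $G$ joins consecutive elements of one of these cycles. So $G$ is a disjoint union of cycles of length $\ge 3$, together with single edges coming from $2$-cycles. Each component can be properly coloured with at most $3$ colours:
\begin{itemize}
\item walk along a cycle $(x_1,\dots,x_L)$ alternating colours $1,2$;
\item if $L$ is odd, give $x_L$ colour $3$.
\end{itemize}
Let $C_i$ be the set of classes receiving colour $i$. Then $C_1,C_2,C_3$ partition $R$ (some $C_i$ may be empty, which is harmless). Consecutive elements $b,\sigma(b)$ never share a colour, so $a,b\in C_i$ implies $a\neq\sigma(b)$, i.e. $\pi^{\delta+1}\nmid a+\rho_0 b$.

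I do not expect a genuine obstacle. The only points requiring care are checking that $\sigma$ is a bijection on reduced classes, which uses $\pi\nmid\rho_0$, and excluding fixed points, which uses $\pi^{\delta+1}\nmid 1+\rho_0$. Odd cycles are exactly why three classes are needed rather than two as in the proof of Theorem~\ref{thm01}: there $\sigma(x)=-x$ is an involution and every cycle has length $2$.
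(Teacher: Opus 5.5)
Your proposal is correct and is essentially the paper's argument. The paper colours the reduced classes greedily: each class $r$ only has to avoid the groups of $-\rho_0 r$ and $-\rho_0^{-1} r$, so three groups always leave one free, and the self-conflict $r\equiv -\rho_0 r \pmod{\pi^{\delta+1}}$ is ruled out by $\pi^{\delta+1}\nmid 1+\rho_0$. Your cycle decomposition of $x\mapsto -\rho_0 x$ with an explicit colouring of each cycle is just a more structured version of this same loop-free, maximum-degree-two $3$-colouring.
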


\bigskip
\noindent\textbf{Proof of Lemma \ref{lem01}.}
Let us list the reduced residue classes mod $\pi^{\delta+1}$:
$r_1,r_2,\dots,r_m$.
These can be divided into three groups using a greedy algorithm such that
\[
r_i,\ -\rho_0 r_i
\]
never fall into the same group. Indeed,
assume that the elements
$r_1,r_2,\dots,r_{i-1}$ have been correctly assigned.
When assigning $r_i$, we must ensure that $r_i$ falls into
a different group than both the group of
\begin{align}
-\rho_0 r_i\ \ \ \ \textup{and}\ \ \ -\rho_0^{-1}r_i. \lb{csop01}
\end{align}
We have three groups, $C_1,C_2,C_3$.
Since the elements in \eqref{csop01} exclude at most two groups
(i.e., the groups to which $-\rho_0r_i$ and $-\rho_0r_i^{-1}$
are assigned),
a conflict can only arise if $r_i$ itself is an excluded element:
\begin{align*}
r_i &\equiv -\rho_0r_i \pmod{\pi^{\delta+1}}
\intertext{or (which is equivalent anyway)}
r_i &\equiv -\rho_0^{-1} r_i \pmod{\pi^{\delta+1}}
\end{align*}
(since in the case of $r_i \equiv -\rho_0r_i\pmod{\pi^{\delta+1}}$
$r_i$ and $-\rho_0r_i$
are definitely in the same group, given that they are identical).

But this case cannot occur since
\[
r_i \equiv -\rho_0r_i \pmod{\pi^{\delta+1}}
\]
implies
\[
\pi^{\delta+1} \mid (1+\rho_0)r_i,
\]
where $r_i$ and $\pi$ are relatively primes, so
\[
\pi^{\delta+1} \mid 1+\rho_0,
\]
which contradicts the conditions of the lemma.

\bigskip
\hfill \break
We now divide the rest of the proof of Theorem \ref{thm02} into two cases based on whether $\rho$ is the negative of an Euler prime power in $\mathcal{P}$ with a positive integer exponent.
(Recall that $\mc P$ contains
Euler primes $\pi$ such that $\textup{Arg}(\pi)\in[0^\circ, 60^\circ)$).

\hfill \break \textbf{Case 1:} Assume that $\rho$ is not the negative of a power of an Euler prime
$\theta\in\mc P$, i.e., $\rho$ is not of the form $\rho=-\theta^\gamma$ with $\theta\in\mc P$, $\gamma\in\mathbb N$.

Before the next lemma, we introduce a new notation.
Let $\rho\in E$ and $\pi$ be an Euler prime,
such that $\textup{Arg}(\pi)\in[0^\circ,60^\circ)$.
Write $\rho$ in the form
\[
\rho=\pi^{\gamma}\rho_0,
\]
where $\pi\nmid \rho_0$ (here, $\rho_0\neq-1$). Then, write $1+\rho_0$ in the form
\[
1+\rho_0=\pi^{\delta}\sigma,
\]
where $\pi\nmid \sigma$. Let $c(\pi, \rho)$ denote the
\[
c(\pi,\rho)\stackrel{\textup{def}}{=}\gamma+\delta \ge 0
\]
integer. It is clear that if
\[
\pi\nmid \rho (1+\rho)
\]
then $c(\pi,\rho)=0$. Thus,
\[
c(\rho) \stackrel{\textup{def}}{=}
\prod_{\substack{\pi\mid \rho(1+\rho)\\\textup{Arg}(\pi)\in[0^\circ,60^\circ)}} \pi^{c(\pi,\rho)},
\]
the value of which solely depends on $\rho$. Let $\tau(c(\rho))$ denote the
number of 
distinct divisors of $c(\rho)$ 
in the ring of Eulerian integers $E$, where two associated divisors
$\delta$ and $\varepsilon\delta$ 
are considered distinct
if $\varepsilon\ne 1$.

Then, Theorem \ref{thm02} will be proved with the constant
\[
c=\frac{\log(\tau(c(\rho))^2+2)}{\log 3}
.\]
 Namely, we will prove
that if $|\mathcal{A}|=3^s(\tau(c(\rho))^2+2)$ for an $s\in\mathbb N$, then,
in the first case,
$\omega_E(\prod_{\substack{a,b\in{\mathcal{A}}\\a\neq{b}}}(a+\rho{b}))>s$.

\begin{lem}\lb{lem02}
Let $\rho\in E$, $\mc A\subset E$ be a finite set and $\pi$ an
Euler prime such that $\textup{Arg}(\pi)\in[0^\circ,60^\circ)$ and
$-\rho$ is not a power of $\pi$ with a positive integer exponent. Then there exists a set $\mc B\subset\mc A$ such that
\[
|\mc B| \ge \dfrac{|\mc A|}{3},
\]
and if $a,b\in\mathcal{B}$, for every integer $u\ge c(\pi,\rho)$,
\begin{align}
\pi^{u}\mid a+\rho b\ \ \ \Rightarrow \ \ \
\pi^{u-c(\pi,\rho)}\mid a,\ b.\lb{fou}
\end{align}
\end{lem}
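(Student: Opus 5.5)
We will prove Lemma \ref{lem02} by combining Lemma \ref{lem01} with a comparison of $\pi$-adic valuations, much as in the proof of Theorem \ref{thm01}. Write $\rho=\pi^{\gamma}\rho_0$ with $\pi\nmid\rho_0$ and $1+\rho_0=\pi^{\delta}\sigma$ with $\pi\nmid\sigma$, so $c(\pi,\rho)=\gamma+\delta$.

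\textbf{Step 1: $\delta$ is finite.} For $\delta$ to be finite we need $1+\rho_0\neq 0$, i.e.\ $\rho_0\neq -1$. If $\rho_0=-1$ and $\gamma\ge 1$, then $-\rho=\pi^{\gamma}$ is a positive power of $\pi$, which the hypothesis excludes. The remaining possibility $\rho_0=-1$, $\gamma=0$ means $\rho=-1$, and this case was already treated separately in the proof of Theorem \ref{thm02}. This is the only delicate point about the hypotheses, and I expect it to be the main thing to get right: once $\delta<\infty$, Lemma \ref{lem01} applies to $\rho_0$, since $\pi\nmid\rho_0$, $\pi^{\delta}\mid 1+\rho_0$ and $\pi^{\delta+1}\nmid 1+\rho_0$.

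\textbf{Step 2: choosing $\mc B$.} For nonzero $a\in\mc A$ write $a=\pi^{\alpha}a_0$ with $\pi\nmid a_0$; here $a_0$ is uniquely determined by $a$ and $\pi$, so its residue class mod $\pi^{\delta+1}$ is a reduced class. Let $C_1,C_2,C_3$ be the partition of the reduced classes mod $\pi^{\delta+1}$ given by Lemma \ref{lem01}. Sort the elements of $\mc A$ into three groups according to which $C_i$ contains $a_0$. The largest group has at least $|\mc A|/3$ elements, and we take it as $\mc B$. If $0\in\mc A$, we handle it by noting that $\pi^{\infty}$ divides $0$, so it causes no trouble; alternatively we discard it and absorb the loss.

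\textbf{Step 3: verifying (\ref{fou}).} Let $a,b\in\mc B$ with $a=\pi^{\alpha}a_0$ and $b=\pi^{\beta}b_0$. Then
\[
a+\rho b=\pi^{\alpha}a_0+\pi^{\beta+\gamma}\rho_0 b_0 ,
\]
and we split into two cases.

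If $\alpha\neq\beta+\gamma$, the exact power of $\pi$ dividing $a+\rho b$ is $\pi^{\min\{\alpha,\beta+\gamma\}}$. So $\pi^{u}\mid a+\rho b$ forces $u\le\alpha$ and $u-\gamma\le\beta$. Since $c(\pi,\rho)\ge\gamma\ge 0$, this gives $\pi^{u-c(\pi,\rho)}\mid a$ and $\pi^{u-c(\pi,\rho)}\mid b$.

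If $\alpha=\beta+\gamma$, then $a+\rho b=\pi^{\alpha}(a_0+\rho_0b_0)$. Because $a_0$ and $b_0$ lie in the same class $C_i$, Lemma \ref{lem01} gives $\pi^{\delta+1}\nmid a_0+\rho_0b_0$. Hence $\pi^{u}\mid a+\rho b$ implies
\[
u\le\alpha+\delta=\beta+\gamma+\delta .
\]
Therefore $u-c(\pi,\rho)\le\beta$ and $u-c(\pi,\rho)=u-\gamma-\delta\le\alpha-\gamma\le\alpha$, which is (\ref{fou}).

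The argument also covers $a=b$, since Lemma \ref{lem01} allows both elements to be taken from the same class.
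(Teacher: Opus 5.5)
Your proof is correct and follows the paper's argument essentially step for step. It uses the same factorizations $\rho=\pi^{\gamma}\rho_0$ and $1+\rho_0=\pi^{\delta}\sigma$, sorts the $\pi$-free parts $a_0$ into the three classes of Lemma \ref{lem01} modulo $\pi^{\delta+1}$ in the same way, and makes the same case split $\alpha\neq\beta+\gamma$ versus $\alpha=\beta+\gamma$. You also make explicit two points the paper leaves implicit: that $1+\rho_0\neq 0$ relies on the standing assumption $\rho\neq -1$, and that $0\in\mc A$ may be placed in any group; discarding it instead would lose the exact bound $|\mc A|/3$.
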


\bigskip
\noindent\textbf{Proof of Lemma \ref{lem02}.} Let
\[
\rho=\pi^{\gamma}\rho_0,
\]
where $\pi\nmid\rho_0$, and
\[
1+\rho_0=\pi^{\delta}\sigma,
\]
where $\pi\nmid\sigma$. Then, $1+\rho_0\neq0$. Let us write all elements of the set $\mc A$ in the form
\[
a=\pi^{\alpha}a_0,
\]
where $\pi\nmid a_0$, and let $C_1,C_2,C_3$ be the partition of the reduced residue classes $\mod \pi^{\delta+1}$ according to Lemma \ref{lem01}. Let
\begin{align*}
\mc B_1\stackrel{\textup{def}}{=}
\{a\in\mc A:\ a=\pi^{\gamma}a_0,\ \pi\nmid a_0,\ a_0\in C_1\}\\
\mc B_2\stackrel{\textup{def}}{=}
\{a\in\mc A:\ a=\pi^{\gamma}a_0,\ \pi\nmid a_0,\ a_0\in C_2\}\\
\mc B_3\stackrel{\textup{def}}{=}
\{a\in\mc A:\ a=\pi^{\gamma}a_0,\ \pi\nmid a_0,\ a_0\in C_3\}.
\end{align*}
Let $\mc B$ be the set with the most elements among
 $\mc B_1,\ \mc B_2,\ \mc B_3$
(if there are several sets with the same number of elements,
we choose $\mc B$ to
be any of them).
Obviously
\[
|\mc B| \ge \dfrac{|\mc A|}{3}.
\]
It remains is to prove that \eqref{fou} holds for $\mc B$.
For this, we write
\[
a=\pi^{\alpha}a_0\ \ \ \ \textup{and}\ \ \ b=\pi^{\beta}b_0
\]
in the form where $\pi\nmid a_0,b_0$. Then, $a_0$ and $b_0$ are elements of the same
$C_i$ set, say $a_0,b_0\in C_1$. We then distinguish two cases.

\bigskip\noindent Case I: $\alpha\ne \beta+\gamma$. Then,
\begin{align*}
\pi^{\min\{\alpha,\beta+\gamma\}}
\mid a+\rho b\ (=\pi^{\alpha}a_0+\pi^{\gamma}\rho_0\pi^{\beta}b_0),
\end{align*}
but
\begin{align*}
\pi^{\min\{\alpha,\beta+\gamma\}+1}
\nmid a+\rho b\ (=\pi^{\alpha}a_0+\pi^{\beta+\gamma}\rho_0b_0).
\end{align*}
We also have
\begin{align*}
\pi^{\alpha}&\mid a\ \ \ \ \textup{and}\ \ \ \ \pi^{\beta}\mid b\\
\pi^{\min\{\alpha,\beta+\gamma\}-\gamma}&\mid a,b,\\
\pi^{\min\{\alpha,\beta+\gamma\}-c(\pi,\rho)}&\mid a,b,
\end{align*}
and thus the lemma is proved in this case.

\bigskip\noindent Case II: $\alpha=\beta+\gamma$. Then,
\[
a+\rho b=\pi^{\alpha}a_0+\pi^{\beta+\gamma}\rho_0b_0
=\pi^{\beta+\gamma} (a_0+\rho_0b_0).
\]
By Lemma \ref{lem01} $\pi^{\delta+1}\nmid a_0+\rho_0b_0$,
so
\begin{align}
\pi^{\beta+\gamma}&\mid a+\rho b,\lb{eeg1}\\
\intertext{but}
\pi^{\beta+\gamma+\delta+1}&\nmid a+\rho b,\notag\\
\pi^{\beta+c(\pi,\rho)+1}&\nmid a+\rho b. \lb{eeg2}
\end{align}
Since
\begin{align}
\pi^{\beta}\mid \pi^{\beta+\gamma}\ \ \textup{and} \ \ \pi^{\beta+\gamma}=
\pi^{\alpha}\mid a, \ \ \
\textup{ we have } \ \ \ \pi^{\beta}\mid a,\ b.\lb{eeg3}
\end{align}
Thus, the lemma follows from \eqref{eeg1}, \eqref{eeg2}, and \eqref{eeg3} in this case
completing the proof of the lemma.

\bigskip
Let us list the Euler prime divisors of the product $\prod_{\substack{a,b\in\mc A\\ a\ne b}}(a+\rho{b})$
with arguments in the interval $[0^\circ,60^\circ)$ (of which every Euler prime divisor has exactly one associate) $$\pi_1,\pi_2,\dots,\pi_s.$$ Suppose that
\[
s\le \dfrac{\log (|\mc A|/(\tau(c(\rho))^2+2))}{\log 3}
= \dfrac{\log (|\mc A|)-\log(\tau(c(\rho))^2+2)}{\log 3},
\]
from which we aim to obtain a contradiction.
We then recursively define
\[
\mc A_{0}\stackrel{\textup{def}}{=}
\mc A\supset \mc A_1\supset \mc A_2
\supset\dots\supset \mc A_s
\]
of sets such that
\[
|\mc A_{i}| \ge \dfrac{|\mc A_{i-1}|}{3},
\]
and if $u\ge c(\pi_i,\rho)$,
\begin{align*}
a,b\in\mc A_i,\ \pi_{i}^{u}\mid a+\rho b\ \ \ \Rightarrow \ \ \
\pi_i^{u-c(\pi_i,\rho)}\mid a,\ b.
\end{align*}
This is easy to do, given
$\mc A_0,\mc A_1,\dots,\mc A_{i-1}$.
We then apply Lemma \ref{lem02}, with the choice $\mc A=\mc A_{i-1}$ and $\pi=\pi_i$,
and the resulting set $\mc B$ gives $\mc A_i$. Then,
\[
|\mc A_s| \ge \dfrac{ |\mc A_{s-1}|}{3}
\ge \dfrac{|\mc A_{s-2}|}{3^2}\ge\dots
\ge \dfrac{|\mc A_{0}|}{3^s}=\dfrac{|\mc A|}{3^s}\ge \tau(c(\rho))^2+2,
\]
and for each of the primes $\pi_1,\pi_2,\dots,\pi_s$, we have that
if $u\ge c(\pi_i,\rho)$,
\begin{align}
a,b\in\mc A_s,\ \pi_{i}^{u}\mid a+\rho b\ \ \ \Rightarrow \ \ \
\pi_i^{u-c(\pi_i,\rho)}\mid a,\ b.\lb{focsi1}
\end{align}
In the following, we denote $\Phi(a,b)$ to be the Eulerian integer defined as:
\[
\Phi(a,b)=\dfrac{a}{\textup{gcd}(a,b)}
+\rho\dfrac{b}{\textup{gcd(a,b)}}.
\]
If $$a=\varepsilon_1\pi_1^{\alpha_1}\pi_2^{\alpha_2}\dots\pi_s^{\alpha_s}$$ and $$b=\varepsilon_2\pi_1^{\beta_1}\pi_2^{\beta_2}\dots\pi_s^{\beta_s},$$ where $\varepsilon_1^6=1=\varepsilon_2^6$, $\alpha_1,\alpha_2,\dots,\alpha_s,\beta_1,\beta_2,\dots,\beta_s\in\mathbb{N}\cup\{0\}$ and $\pi_1,\pi_2\dots,\pi_s\in\mc{P}$, then $$\gcd(a,b)=\pi_1^{\min\{\alpha_1,\beta_1\}}\pi_2^{\min\{\alpha_2,\beta_2\}}\dots\pi_s^{\min\{\alpha_s,\beta_s\}}.$$
We prove the following:
\begin{lem}\lb{lem03}
\[
|\{\Phi(a,b):\ a,b\in \mc{A}_s\}| \le \tau(c(\rho)).
\]
\end{lem}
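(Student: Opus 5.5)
The plan is to show that for $a,b\in\mc A_s$, $a\ne b$, the Eulerian integer $\Phi(a,b)$ divides $c(\rho)$ up to a unit. More precisely, $\Phi(a,b)$ is a unit times a product of the $\pi_i$ with exponents at most $c(\pi_i,\rho)$. Then $\Phi(a,b)$ is one of the divisors of $c(\rho)$ counted by $\tau(c(\rho))$, where associates count as distinct, and the bound follows. (The pairs with $a=b$ are presumably excluded implicitly, or they contribute the single value $1+\rho$; I would handle this by restricting to $a\neq b$, as in the later use.)

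First, the elements of $\mc A_s$ need not be composed only of $\pi_1,\dots,\pi_s$. I would therefore use $\gcd(a,b)$ in general, fixed as a normalized product of primes in $\mc P$. Put $g=\gcd(a,b)$, $a'=a/g$, $b'=b/g$, so $\gcd(a',b')=1$ and $\Phi(a,b)=a'+\rho b'$. Since $a+\rho b=g\,\Phi(a,b)$ divides the product in the theorem, every prime divisor of $\Phi(a,b)$ is associate to some $\pi_i$. So $\Phi(a,b)=\varepsilon\prod_i \pi_i^{u_i}$ with $\varepsilon$ a unit, and it remains to show $u_i\le c(\pi_i,\rho)$ for every $i$.

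Fix $i$ with $u_i>0$, and let $e_i$ be the exponent of $\pi_i$ in $g$. Then $\pi_i^{e_i+u_i}\mid a+\rho b$. Suppose $u_i>c(\pi_i,\rho)$. Then $e_i+u_i\ge c(\pi_i,\rho)$, so \eqref{focsi1} gives $\pi_i^{e_i+u_i-c(\pi_i,\rho)}\mid a,b$. This exponent is at least $e_i+1$, which contradicts $e_i$ being the exact exponent of $\pi_i$ in $\gcd(a,b)$. Hence $u_i\le c(\pi_i,\rho)$.

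Finally, $\Phi(a,b)\neq0$: it could vanish only if $a'=-\rho b'$ with coprime $a',b'$, forcing $b'$ to be a unit and $\rho$ to be a unit multiple of $a'$. I expect this degenerate situation to be the main obstacle. If it occurs, $\Phi(a,b)=0$ is a single extra value, and I would check it against the Case 1 hypothesis on $\rho$, or absorb it into the count with the slack in the constant $\tau(c(\rho))^2+2$. The second obstacle is making the unit bookkeeping match the convention that associates count as distinct in $\tau$. Since $\Phi(a,b)$ equals a unit times a product of primes in $\mc P$ dividing $c(\rho)$ to admissible powers, it is exactly such a divisor, and this gives $|\{\Phi(a,b)\}|\le\tau(c(\rho))$.
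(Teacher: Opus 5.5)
Your argument is correct and is essentially the paper's proof, done prime by prime. The paper writes $a+\rho b=uv$, where $u$ carries the exponents $\min\{c(\pi_i,\rho),\alpha_i\}$, and uses \eqref{focsi1} to get $v\mid\gcd(a,b)$. It then concludes $\Phi(a,b)\mid a/v+\rho b/v=u\mid c(\rho)$. Your valuation bound $u_i\le c(\pi_i,\rho)$ is the same step. Normalizing $\gcd$ for arbitrary elements of $\mc A_s$ and restricting to $a\ne b$ (the only case used later) tidy up points the paper leaves implicit. The diagonal is harmless anyway: $\Phi(a,a)=\varepsilon(1+\rho)$ for a unit $\varepsilon$, not the single value $1+\rho$, and it still divides $c(\rho)$.

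The one thing to fix is the degenerate case $\Phi(a,b)=0$, where neither of your proposed remedies works.
\begin{itemize}
\item The Case 1 hypothesis cannot exclude it. For any $\rho\ne-1$, the pair $a=-\rho$, $b=1$ gives $a+\rho b=0$.
\item Counting $0$ as one extra value would break the stated bound $\tau(c(\rho))$.
\item It would also spoil the pigeonhole step. The constant $\tau(c(\rho))^2+2$ is used there exactly, with no room for extra values.
\end{itemize}
The right observation is that the proof assumes the product $\prod_{a\ne b}(a+\rho b)$ has only the finitely many prime divisors $\pi_1,\dots,\pi_s$. So the product is nonzero; if it were $0$, every Euler prime would divide it and the theorem would be trivial. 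Hence $a+\rho b\ne 0$, and so $\Phi(a,b)\ne0$, for all distinct $a,b\in\mc A_s$. The paper uses this tacitly when it writes $a+\rho b=\varepsilon\pi_1^{\alpha_1}\cdots\pi_s^{\alpha_s}$.
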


\bigskip\noindent\textbf{ Proof of Lemma \ref{lem03}.}
For $a,b\in \mc A_s$, write $a+\rho b$ in the form
\begin{align*}
&a+\rho b =\varepsilon\pi_1^{\alpha_1}\cdots\pi_s^{\alpha_s}\\
&=\underbrace{\zj{\varepsilon\pi_1^{\min\{c(\pi_1,\rho),\alpha_1\}}
\cdots \pi_s^{\min\{c(\pi_s,\rho),\alpha_s\}}}}_{u}
\cdot
\underbrace{\zj{\pi_1^{\alpha_1-\min\{c(\pi_1,\rho),\alpha_1\}}
\cdots \pi_s^{\alpha_s-\min\{c(\pi_s,\rho),\alpha_s\}}}}_{v}\\
&\stackrel{\textup{def}}{=} uv,
\end{align*} where $\varepsilon^6=1$. If $\alpha_i\ge c(\pi_i,\rho)$, then, according to \eqref{focsi1},
\begin{align*}
\pi_i^{\alpha_i-c(\pi_i,\rho)}\mid a,b,\\
\pi_i^{\alpha_i-\min\{c(\pi_i,\rho),\alpha_i\}}\mid a,b.
\end{align*}

If $\alpha_i< c(\pi_i,\rho)$, then
\[
\pi_i^{\alpha_i-\min\{c(\pi_i,\rho),\alpha_i\}}=1\mid a,b.
\]
Since this holds for all $1\le i\le s$;
thus, $v\mid a$ and $v\mid b$,
so $v\mid\textup{gcd}(a,b)$.
It follows that
\[
\dfrac{a}{\textup{gcd}(a,b)}+\rho
\dfrac{b}{\textup{gcd}(a,b)}\mid \dfrac{a}{v}+\rho \dfrac{b}{v}=u.
\]
Furthermore,
\[
u\mid \pi_1^{c(\pi_1,\rho)}\cdots\pi_s^{c(\pi_s,\rho)}\mid{c(\rho)} .
\]
Thus, $\Phi(a,b)\mid c(\rho)$, which completes the proof of the lemma.

\bigskip
From the lemma, it is clear that
\begin{align}
|\{(\Phi(a,b),\Phi(b,a)):\ a,b\in \mathcal{A}_s\}| \le \tau(c(\rho))^2.
\lb{tau1}
\end{align}
Let $t=\tau(c(\rho))^2$.
Since $|\mc A_s|\ge \tau(c(\rho))^2+2=t+2$,
we can select $t+2$ distinct elements from $\mc A_s$.
Let $a$ denote the first of these elements, and let $b_1,b_2,\dots,b_{t+1}$ denote the others.
By \eqref{tau1},
\[
|\{(\Phi(a,b_i),\Phi(b_i,a)):1\le s\le t+1\}|
\le \tau(c(\rho))^2=t.
\]
By the pigeonhole principle, there exist different $i$ and $j$ such that
\[
(\Phi(a,b_i),\Phi(b_i,a))=(\Phi(a,b_j),\Phi(b_j,a)).
\]
Let
\[
\Phi(a,b_i)=\Phi(a,b_j)=z_1
\]
and
\[
\Phi(b_i,a)=\Phi(b_j,a)=z_2.
\]
The pairs
$\dfrac{a}{\textup{gcd} (a,b_i)},\ \dfrac{b_i}{\textup{gcd} (a,b_i)}$
and
$\dfrac{a}{\textup{gcd} (a,b_j)},\ \dfrac{b_j}{\textup{gcd} (a,b_j)}$
are the solutions of the same system of two linear equations
(with two unknowns)
\begin{align*}
x+\rho y=z_1\\
\rho x+y=z_2.
\end{align*}
Since
$\textup{det}\begin{bmatrix}1& \rho \\ \rho&1\end{bmatrix}\ne 0$,
the above system of equations has an unique solution, i.e.
\begin{align}
\dfrac{a}{\textup{gcd}(a,b_i)} = \dfrac{a}{\textup{gcd}(a,b_j)}\lb{ww11}\\
\dfrac{b_i}{\textup{gcd}(a,b_i)} = \dfrac{b_j}{\textup{gcd}(a,b_j)}\lb{ww21}
\end{align}
From \eqref{ww11}, it follows that
$\textup{gcd }(a,b_i)
=\textup{gcd }(a,b_j)$, and by substituting this into \eqref{ww21} we obtain,
$b_i=b_j$, which is a contradiction. This proves the theorem in Case 1.

\bigskip\noindent\textbf{Case 2:} There is an Euler prime $\theta\in\mc P$ with argument $[0^\circ,60^\circ)$ and a positive integer $\gamma$
such that $\rho=-\theta^\gamma$.

\bigskip\noindent Lemma 2 of the proof of the previous part and the notation $c(\pi,\rho)$ for Euler primes with argument $[0^\circ,60^\circ)$
that are not $\theta$,
can still be applied, in which case $\rho=\rho_0$
and $1+\rho_0=\pi^\delta\sigma$.
Consequently, if $\mc A\subset E$ is a finite set and $\pi$ is an Euler prime with argument in the interval $[0^\circ,60^\circ)$,
which is different from $\theta$,
there exists a set $\mc B\subset\mc A$ for which
$|\mc B| \ge \dfrac{|\mc A|}{3}$,
such that for every integer $u\ge c(\pi,\rho)$ and $a,b\in\mathcal{B}$,
$\pi^{u}\mid a+\rho b\ \ \ \Rightarrow \ \ \
\pi^{u-c(\pi,\rho)}\mid a,\ b$. The proof proceeds similarly. Finally, we can divide the set by $\theta$ as follows:

\begin{lem}\lb{lem04}
Let $\theta\in\mc P$ be an Euler prime with argument in the interval $[0^\circ,60^\circ)$ and let $\gamma$ be a positive integer such that
$\rho=-\theta^\gamma$.
Then, if $\mathcal{A}\subset{E}$ is a finite set, there exists a
subset $\mathcal{B}\subset\mathcal{A}$ such that $|\mathcal{B}|\geq\frac{|\mathcal{A}|}{2}$, and if $a,b\in\mathcal{B}$, $a\neq{b}$, then $\theta^k\mid{a+\rho{b}=a-\theta^\gamma{b}}$ implies $\theta^{k-\gamma}\mid{a},b$.
\end{lem}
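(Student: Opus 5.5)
Since $\theta$ divides $\rho$, the residue-class splitting of Lemma \ref{lem01} is not available. Instead I would split $\mathcal{A}$ according to the $\theta$-adic valuation of its elements. The only pairs that can make $a-\theta^\gamma b$ unexpectedly divisible by $\theta$ are those where the two terms $a$ and $\theta^\gamma b$ have the same valuation. The plan is to exclude such pairs by a two-colouring of valuations.

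\textbf{Setup.} For every nonzero $a\in\mathcal{A}$ write $a=\theta^{\alpha}a_0$ with $\theta\nmid a_0$, and call $v(a)=\alpha$ the valuation of $a$. Define
\[
\mathcal{B}_j=\{a\in\mathcal{A}\setminus\{0\}:\ \lfloor v(a)/\gamma\rfloor\equiv j \pmod 2\},\qquad j\in\{0,1\}.
\]
Put $0$ (if it lies in $\mathcal{A}$) into either class, and let $\mathcal{B}$ be the larger class. Then $|\mathcal{B}|\ge |\mathcal{A}|/2$.

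\textbf{The zero element.} Zero causes no trouble, and I would check this first.
\begin{itemize}
\item If $b=0$, then $\theta^k\mid a$. Hence $\theta^{k-\gamma}\mid a$ (trivially so when $k\le\gamma$), and $\theta^{k-\gamma}\mid 0$.
\item If $a=0$, then $\theta^k\mid \theta^\gamma b$, so $\theta^{k-\gamma}\mid b$, and $\theta^{k-\gamma}\mid 0$.
\end{itemize}

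\textbf{Key observation.} Take nonzero $a,b\in\mathcal{B}$ with valuations $\alpha=v(a)$ and $\beta=v(b)$. Then $\alpha\neq\beta+\gamma$. Indeed, if $\alpha=\beta+\gamma$, then $\lfloor\alpha/\gamma\rfloor=\lfloor\beta/\gamma\rfloor+1$. This contradicts the fact that $a$ and $b$ lie in the same parity class.

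\textbf{The divisibility argument.} Consider
\[
a-\theta^\gamma b=\theta^{\alpha}a_0-\theta^{\beta+\gamma}b_0 .
\]
Since the two exponents $\alpha$ and $\beta+\gamma$ differ, the valuation of $a-\theta^\gamma b$ is exactly $m=\min\{\alpha,\beta+\gamma\}$. This is the same argument as Case I of Lemma \ref{lem02}. So if $\theta^k\mid a+\rho b$, then $k\le m$. This gives the two required divisibilities:
\begin{itemize}
\item $k-\gamma\le \alpha-\gamma\le\alpha$, so $\theta^{k-\gamma}\mid a$;
\item $k-\gamma\le\beta$, so $\theta^{k-\gamma}\mid b$.
\end{itemize}
If $k\le\gamma$, both statements are trivial.

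\textbf{Main obstacle.} The only genuine step is the choice of colouring. One has to find a $2$-colouring of $\mathbb{N}\cup\{0\}$ in which $\alpha$ and $\alpha+\gamma$ always receive different colours, and the map $\alpha\mapsto\lfloor\alpha/\gamma\rfloor \bmod 2$ does exactly this. Everything else is the routine valuation computation already used in Lemma \ref{lem02}.
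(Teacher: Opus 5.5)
Your proof is correct and follows the paper's route: you split $\mathcal{A}$ into two classes by the parity of a rounded-down quotient of the $\theta$-adic valuation, so that $v(a)\neq v(b)+\gamma$ within a class, and then read off the divisibility from the fact that $a$ and $\theta^\gamma b$ have different valuations. The one difference is in your favour: the paper colours by $\lfloor \alpha/(2\gamma)\rfloor \bmod 2$, which puts valuations $0$ and $\gamma$ in the same class (e.g.\ $\theta=2$, $\gamma=1$, $a=2$, $b=1$ gives $a+\rho b=0$ although $2^{k-1}\nmid 1$ for $k\ge 2$), whereas your $\lfloor\alpha/\gamma\rfloor \bmod 2$ genuinely separates $\beta$ from $\beta+\gamma$, and your explicit check of $0\in\mathcal{A}$ fills a further small omission.
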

\bigskip\noindent\textbf{Proof of Lemma \ref{lem04}.}
We begin by writing
the elements of the set $\mathcal{A}$ in the form $\theta^{\alpha}a_0$, where $\theta\nmid{a_0}$, $0\leq\alpha\in\mathbb{Z}$ and $a_0\in{E}$.
We then partition $\mc A$ into two subsets:
$$\mathcal{A}_1=\{\theta^{\alpha}a_0=a\in\mathcal{A}:\ \theta\nmid a_0,\ \Big[\frac{\alpha}{2\gamma}\Big]\equiv0\pmod2\}$$ and $$\mathcal{A}_2=\{\theta^{\alpha}a_0=a\in\mathcal{A}:\ \theta\nmid b_0,\ \Big[\frac{\alpha}{2\gamma}\Big]\equiv1\pmod2\}.$$ Here, $0$ can be arbitrarily partitioned into either of the two sets. Let $\mathcal{B}$
be the larger of $\mathcal{A}_1$ and $\mathcal{A}_2$
(or either one if they have the same cardinality).
Consider any two distinct elements $a,b\in\mc B$.
If these elements are written in the form $a=\theta^{\alpha_a}a_0$ and
$b=\theta^{\alpha_b}b_0$, the absolute difference of their exponents,
$|\alpha_a-\alpha_b|$, cannot be $\gamma$.
Thus, in the case of $a,b\in\mathcal{B},\ (a\neq{b})$, the highest exponent of $\theta$ that divides the numbers $a$ and
$\rho{b}\ (=-\theta^{\gamma}b)$ must be different. Thus, in the case of $\theta^k\mid{a+\rho{b}=a-\theta^\gamma{b}}$, $\theta^{k}\mid{a},\rho{b}$, so $\theta^{k-\gamma}\mid{a},b$.
This completes the proof of Lemma \ref{lem04}.

\bigskip

Now we consider the case when $-\rho=\theta^{\gamma}$,
where $0<\gamma\in\mathbb Z$ and $\theta\in\mc P$. Let
\[
c(\theta,\rho)\stackrel{\textup{def}}{=}\gamma,
\]
and let
$$c(\rho)\stackrel{\textup{def}}{=}\prod_{\substack{\pi\mid{\rho(1+\rho)}\\\textup{Arg}(\pi)\in[0^\circ,60^\circ)}}\pi^{c(\pi,\rho)}.$$
(Here the product runs on the prime divisors of $\rho(1+\rho)$, where the prime divisors
in this case also include $\theta$.)

\bigskip
Then the method used in Case 1 will also be applicable:
Let us list the Euler prime divisors of $\prod_{\substack{a,b\in\mc A\\ a\ne b}}(a+\rho{b})$
whose arguments lie in the interval
$[0^\circ,60^\circ)$: $\pi_1,\pi_2,\dots,\pi_s$.
Suppose that
\[
s\le \dfrac{\log (|\mc A|/(\tau(c(\rho))^2+2))}{\log 3}
= \dfrac{\log (|\mc A|)-\log(\tau(c(\rho))^2+2)}{\log 3},
\]
Our aim is to derive a contradiction from this assumption.

We then recursively define a sequence of sets
\[
\mc A_{0}\stackrel{\textup{def}}{=}
\mc A\supset \mc A_1\supset \mc A_2
\supset\dots\supset \mc A_s
\]
for which
\[
|\mc A_{i}| \ge \dfrac{|\mc A_{i-1}|}{3},
\]
and if $u\ge c(\pi_i,\rho)$,
\begin{align*}
a,b\in\mc A_i,\ \pi_{i}^{u}\mid a+\rho b\ \ \ \Rightarrow \ \ \
\pi_i^{u-c(\pi_i,\rho)}\mid a,\ b.
\end{align*}

This is easy to do, since, given
$\mc A_0,\mc A_1,\dots,\mc A_{i-1}$,
we can apply Lemma \ref{lem02} or Lemma \ref{lem04}. $\mc A=\mc A_{i-1}$ and $\pi=\pi_i$. We use Lemma 4 exactly when $-\rho$ is a power of $\pi$ with a positive integer exponent, and the resulting set $\mc B$ gives $\mc A_i$. Instead of the halving obtained in Lemma 4, we can also third the set there. Then,
\[
|\mc A_s| \ge \dfrac{ |\mc A_{s-1}|}{3}
\ge \dfrac{|\mc A_{s-2}|}{3^2}\ge\dots
\ge \dfrac{|\mc A_{0}|}{3^s}=\dfrac{|\mc A|}{3^s}\ge \tau(c(\rho))^2+2,
\]
and for each of the primes $\pi_1,\pi_2,\dots,\pi_s$, the following holds:
if $u\ge c(\pi_i,\rho)$,
\begin{align}
a,b\in\mc A_s,\ \pi_{i}^{u}\mid a+\rho b\ \ \ \Rightarrow \ \ \
\pi_i^{u-c(\pi_i,\rho)}\mid a,\ b.\lb{focsi}
\end{align}

In the following, let $\Phi(a,b)$ be the Eulerian integer defined as:
\[
\Phi(a,b)=\dfrac{a}{\textup{gcd}(a,b)}
+\rho\dfrac{b}{\textup{gcd(a,b)}}.
\]
Similarly to Lemma 3, \[
|\{\Phi(a,b):\ a,b\in \mathcal{A}_s\}| \le \tau(c(\rho))
\] now holds with the same reasoning.

\bigskip
From this, it is clear that
\begin{align}
|\{(\Phi(a,b),\Phi(b,a)):\ a,b\in \mathcal{A}_s\}| \le \tau(c(\rho))^2.
\lb{tau}
\end{align}
Then, let $t=\tau(c(\rho))^2$.
Since $|\mc A_s|\ge \tau(c(\rho))^2+2=t+2$, we can select $t+2$ distinct elements from $\mc A_s$.
We denote the first element by $a$, and the others by
$b_1,b_2,\dots,b_{t+1}$.
Then, by \eqref{tau},
\[
|\{(\Phi(a,b_i),\Phi(b_i,a)):1\le s\le t+1\}|
\le \tau(c(\rho))^2=t.
\]
Thus, by the pigeonhole principle, there exist distinct $i$ and $j$ such that
\[
(\Phi(a,b_i),\Phi(b_i,a))=(\Phi(a,b_j),\Phi(b_j,a)).
\]
Let
\[
\Phi(a,b_i)=\Phi(a,b_j)=z_1
\]
and
\[
\Phi(b_i,a)=\Phi(b_j,a)=z_2.
\]
Then,
$\dfrac{a}{\textup{gcd} (a,b_i)},\ \dfrac{b_i}{\textup{gcd} (a,b_i)}$
and
$\dfrac{a}{\textup{gcd} (a,b_j)},\ \dfrac{b_j}{\textup{gcd} (a,b_j)}$
are the solutions of the same system of linear equations with two unknowns
\begin{align*}
x+\rho y=z_1\\
\rho x+y=z_2.
\end{align*}
 Since
$\textup{det}\begin{bmatrix}1& \rho \\ \rho&1\end{bmatrix}\ne 0$,
the above system of linear
equations has an unique solution, i.e.,
\begin{align}
\dfrac{a}{\textup{gcd}(a,b_i)} = \dfrac{a}{\textup{gcd}(a,b_j)}\lb{ww1}\\
\dfrac{b_i}{\textup{gcd}(a,b_i)} = \dfrac{b_j}{\textup{gcd}(a,b_j)}\lb{ww2}
\end{align}
Then, by \eqref{ww1}, $\textup{gcd}(a,b_i)
=\textup{gcd}(a,b_j)$, but then, by \eqref{ww2},
$b_i=b_j$, which is a contradiction. Our result now follows.

\bigskip\noindent\textbf{Proof of Corollary \ref{cor01}. }
Since $\omega = \frac{-1+\sqrt{3}i}{2}$ is a root of unity in the ring of
Eulerian integers $E$, we use the notation corresponding to the proof of Case 1 of Theorem 2: $c(\omega)=1$ and $\tau(c(\omega))=6$. Thus, the lower bound on the number of distinct Eulerian prime factors ($\omega_E$) follows directly from the proof of Theorem 2:
$$\omega_E\left(\prod_{\substack{a,b\in{\mathcal{A}}\\a\neq{b}}}(a+\omega{b})\right)>\frac{\log|\mathcal{A}|-\log(\tau(c(\omega))^2+2)}{\log 3}=\frac{\log|\mathcal{A}|-\log38}{\log 3}.$$

We connect this result to the rational prime factors by observing that the product $\prod (a^2-ab+b^2)$ is the norm of the product $\prod (a+\omega b)$:
$$\prod_{\substack{a,b\in{\mathcal{A}}\\a\neq{b}}}(a^2-ab+b^2) = N\left(\prod_{\substack{a,b\in{\mathcal{A}}\\a\neq{b}}}(a+\omega{b})\right) = \prod_{\substack{a,b\in{\mathcal{A}}\\a\neq{b}}}(a+\omega{b})(a+\omega^2b).$$

For any rational prime $p$, the product $p$ factors in $E$ into at most two distinct conjugate Eulerian primes. Thus, the number of distinct rational prime factors, $\omega_{\mathbb{N}}$, of the product is at least half the number of distinct Eulerian prime factors $\omega_E$.

Therefore, this implies:
$$\omega_{\mathbb{N}}\left(\prod_{\substack{a,b\in{\mathcal{A}}\\a\neq{b}}}(a^2-ab+b^2)\right)>\frac{1}{2}\left(\frac{\log|\mathcal{A}|-\log38}{\log 3}\right),$$
which is the required result.

\bigskip\noindent\textbf{Proof of Corollary \ref{cor02}. }
This proof is similar to the proof of Corollary 1. Since $-\omega$ is an Eulerian integer that is a root of unity and $-\omega+1=\frac{3-\sqrt{3}i}{2}=\sqrt{3}i(\frac{-1-\sqrt{3}i}{2})$ is an Euler prime, with the notation corresponding to the proof of Case 1 of Theorem 2, $c(-\omega)=-\sqrt{3}i\omega$ and $\tau(c(-\omega))=\tau(-\sqrt{3}i\omega)=12$.

Thus, the lower bound on the number of distinct Euler prime factors ($\omega_E$) follows directly from the proof of Theorem 2 (with $\rho=-\omega$):
$$\omega_E\left(\prod_{\substack{a,b\in{\mathcal{A}}\\a\neq{b}}}(a-\omega{b})\right)>\frac{\log|\mathcal{A}|-\log(\tau(c(-\omega))^2+2)}{\log 3}=\frac{\log|\mathcal{A}|-\log146}{\log 3}.$$

We relate this result to the rational prime factors by noting that the product $\prod (a^2+ab+b^2)$ is the norm of the product $\prod (a-\omega b)$:
$$\prod_{\substack{a,b\in{\mathcal{A}}\\a\neq{b}}}(a^2+ab+b^2) = N\left(\prod_{\substack{a,b\in{\mathcal{A}}\\a\neq{b}}}(a-\omega{b})\right) = \prod_{\substack{a,b\in{\mathcal{A}}\\a\neq{b}}}(a-\omega{b})(a-\omega^2b).$$

The number of distinct rational prime factors, $\omega_{\mathbb{N}}$, of the rational product $\prod (a^2+ab+b^2)$ is related to the number of distinct Euler prime factors $\omega_E$. Since any rational prime $p$ is the product of at most two distinct conjugate Euler prime factors, we have $\omega_{\mathbb{N}} \ge \omega_E / 2$. This implies:
$$\omega_{\mathbb{N}}\left(\prod_{\substack{a,b\in{\mathcal{A}}\\a\neq{b}}}(a^2+ab+b^2)\right)>\frac{1}{2}\left(\frac{\log|\mathcal{A}|-\log146}{\log 3}\right),$$
which yields the required result.

\bigskip\noindent\textbf{Proof of Theorem \ref{thm03}.}
\bigskip Let $n \geq 2$ be an integer. We define the sets $\mathcal{A}'$ and $\mathcal{B}'$ in $\mathbb{Z}^n$ for the elements of the sets $\mathcal{A}$ and $\mathcal{B}$:
\begin{align*}
\mathcal{A}'&=\{(r_1x^{m_1},r_2x^{m_2},\dots,r_{n-1}x^{m_{n-1}},1): x\in\mathcal{A}\}\\
\mathcal{B}&'=\{(1,y,y^2,\dots,y^{n-2},r_ny^{n-1}): y\in\mathcal{B}\}
\end{align*}
Since $|\mathcal{A}| \geq |\mathcal{B}|$, we have $|\mathcal{A}'|\geq|\mathcal{B}'|\geq{2n-2}$.

\bigskip  The following theorem is included in the paper of Győry, Sárközy, and Stewart \cite{GyoryStewartSarkozy}:

\bigskip\noindent\textbf{Theorem D.} [Győry-Sárközy-Stewart]
\textit{Let $n\geq2$ be an integer, and let $\mathcal{A},\mathcal{B}\subset(\mathbb{Z}^+)^n$ be finite sets such that $|\mathcal{A}|\geq|\mathcal{B}|\geq{2n-2}$. If for every vector in $\mathcal{A}$ the $n$-th coordinate is $1$, and any $n$ vectors in $\mathcal{B} \cup \{(0,\dots,0,1)\}$ are linearly independent, then there exists an effectively computable positive constant $c$ for which
$$\omega_{\mathbb{N}}\left(\prod_{\substack{(a_1,\dots,a_n)\in\mathcal{A}\\(b_1,\dots,b_n)\in\mathcal{B}}}(a_1b_1+a_2b_2+\dots+a_nb_n)\right)>c\log|\mathcal{A}|.$$}
\break

\bigskip\noindent
To ensure we can apply Theorem D, we must verify that its conditions hold for our sets $\mc A'$ and $\mc B'$.
\begin{enumerate}
    \item The $n$-th coordinate of every vector in $\mathcal{A}'$ is $1$.
    \item We show that any $n$ distinct vectors from $\mathcal{B}' \cup \{(0,\dots,0,1)\}$ are linearly independent.
\end{enumerate}

Condition 1 is immediate. We proceed to prove Condition 2.

\bigskip\noindent\textbf{Case 1: }All $n$ vectors are from $\mathcal{B}'$.
Let the vectors correspond to $y_1, y_2, \dots, y_n \in \mathcal{B}$. The determinant of the matrix formed by these vectors is proportional to a Vandermonde determinant:
$$
\begin{vmatrix}
1 & 1 & \dots & 1 \\
y_1 & y_2 & \dots & y_{n} \\
\dots & \dots & \dots & \dots \\
y_1^{n-2} & y_2^{n-2} & \dots & y_{n}^{n-2} \\
r_ny_1^{n-1} & r_ny_2^{n-1} & \dots & r_ny_{n}^{n-1} \\
\end{vmatrix} = r_n \cdot \prod_{1\le i<j\le n} (y_j-y_i) \neq 0.
$$
Since the elements $y_i$ are distinct and $r_n \neq 0$, the determinant is non-zero, proving linear independence.

\bigskip\noindent\textbf{Case 2:} One vector is $(0,\dots,0,1)$.
Let the $n-1$ vectors correspond to $y_1, y_2, \dots, y_{n-1} \in \mathcal{B}$, and the $n$-th vector be $\mathbf{e}_n = (0,\dots,0,1)$. The determinant of the matrix formed by these vectors is found by cofactor expansion along the last column:
$$
\begin{vmatrix}
1 & 1 & \dots & 1 & 0 \\
y_1 & y_2 & \dots & y_{n-1} & 0 \\
\dots & \dots & \dots & \dots & \dots \\
y_1^{n-2} & y_2^{n-2} & \dots & y_{n-1}^{n-2} & 0 \\
r_ny_1^{n-1} & r_ny_2^{n-1} & \dots & r_ny_{n-1}^{n-1} & 1 \\
\end{vmatrix} = 1 \cdot \begin{vmatrix}
1 & 1 & \dots & 1 \\
y_1 & y_2 & \dots & y_{n-1} \\
\dots & \dots & \dots & \dots \\
y_1^{n-2} & y_2^{n-2} & \dots & y_{n-1}^{n-2} \\
\end{vmatrix} \neq 0.
$$
The resulting subdeterminant is a non-zero Vandermonde determinant since $y_i$ are distinct. Thus, the vectors are linearly independent.

\bigskip
Since the assumptions are met, we apply Theorem D to the sets $\mathcal{A}'$ and $\mathcal{B}'$:
$$\omega_{\mathbb{N}}\left(\prod_{\substack{\mathbf{a}\in\mathcal{A}'\\ \mathbf{b}\in\mathcal{B}'}}\mathbf{a}\cdot\mathbf{b}\right)>c\log|\mathcal{A}'|=c\log|\mathcal{A}|,$$
where the inner product $\mathbf{a}\cdot\mathbf{b}$ yields:
\begin{align*}
\mathbf{a}\cdot\mathbf{b} &= (r_1x^{m_1})(1) + (r_2x^{m_2})(y) + \dots + (r_{n-1}x^{m_{n-1}})(y^{n-2}) + (1)(r_ny^{n-1}) \\
&= r_1x^{m_1}+r_2x^{m_2}y+\dots+r_{n-1}x^{m_{n-1}}y^{n-2}+r_ny^{n-1} = f(x,y).
\end{align*}
Thus,
$$\omega_{\mathbb{N}}\left(\prod_{x\in\mathcal{A},y\in\mathcal{B}}f(x,y)\right) > c\log|\mathcal{A}|$$
for an effectively computable positive constant $c$, as required.

\end{document}